\newcommand{\nc}{\newcommand}
\nc{\fg}{\mathfrak{f} } \nc{\vg}{\mathfrak{v} } \nc{\wg}{\mathfrak{w} }
\nc{\zg}{\mathfrak{z} } \nc{\ngo}{\mathfrak{n} } \nc{\kg}{\mathfrak{k} }
\nc{\mg}{\mathfrak{m} } \nc{\bg}{\mathfrak{b} } \nc{\ggo}{\mathfrak{g} }
\nc{\ggob}{\overline{\mathfrak{g}} } \nc{\sog}{\mathfrak{so} }
\nc{\sug}{\mathfrak{su} } \nc{\spg}{\mathfrak{sp} } \nc{\slg}{\mathfrak{sl} }
\nc{\glg}{\mathfrak{gl} } \nc{\cg}{\mathfrak{c} } \nc{\rg}{\mathfrak{r} }
\nc{\hg}{\mathfrak{h} } \nc{\tg}{\mathfrak{t} } \nc{\ug}{\mathfrak{u} }
\nc{\dg}{\mathfrak{d} } \nc{\ag}{\mathfrak{a} } \nc{\pg}{\mathfrak{p} }
\nc{\sg}{\mathfrak{s} } \nc{\affg}{\mathfrak{aff} } \nc{\qg}{\mathfrak{q} }
\nc{\pca}{\mathcal{P}} \nc{\nca}{\mathcal{N}} \nc{\lca}{\mathcal{L}}
\nc{\oca}{\mathcal{O}} \nc{\mca}{\mathcal{M}} \nc{\tca}{\mathcal{T}}
\nc{\aca}{\mathcal{A}} \nc{\cca}{\mathcal{C}} \nc{\gca}{\mathcal{G}}
\nc{\sca}{\mathcal{S}} \nc{\hca}{\mathcal{H}} \nc{\bca}{\mathcal{B}}
\nc{\dca}{\mathcal{D}} \nc{\val}{\operatorname{val}}
\nc{\vp}{\varphi} \nc{\ddt}{\frac{d}{dt}} \nc{\dds}{\frac{d}{ds}}
\nc{\dpar}{\frac{\partial}{\partial t}} \nc{\im}{\mathrm{i}}
\nc{\SO}{\mathrm{SO}} \nc{\Spe}{\mathrm{Sp}} \nc{\Sl}{\mathrm{SL}}
\nc{\SU}{\mathrm{SU}} \nc{\Or}{\mathrm{O}} \nc{\U}{\mathrm{U}} \nc{\Gl}{\mathrm{GL}}
\nc{\Se}{\mathrm{S}} \nc{\Cl}{\mathrm{Cl}} \nc{\Spein}{\mathrm{Spin}}
\nc{\Pin}{\mathrm{Pin}} \nc{\G}{\mathrm{GL}_n(\RR)} \nc{\g}{\mathfrak{gl}_n(\RR)}
\nc{\RR}{{\Bbb R}} \nc{\HH}{{\Bbb H}} \nc{\CC}{{\Bbb C}} \nc{\ZZ}{{\Bbb Z}}
\nc{\FF}{{\Bbb F}} \nc{\NN}{{\Bbb N}} \nc{\QQ}{{\Bbb Q}} \nc{\PP}{{\Bbb P}} \nc{\OO}{{\Bbb O}}
\nc{\vs}{\vspace{.2cm}} \nc{\vsp}{\vspace{1cm}} \nc{\ip}{\langle\cdot,\cdot\rangle}
\nc{\ipp}{(\cdot,\cdot)} \nc{\la}{\langle} \nc{\ra}{\rangle} \nc{\unm}{\tfrac{1}{2}}
\nc{\unc}{\tfrac{1}{4}} \nc{\und}{\tfrac{1}{16}} \nc{\no}{\vs\noindent}
\nc{\lam}{\Lambda^2(\RR^n)^*\otimes\RR^n} \nc{\tangz}{{\rm T}^{\rm Zar}}
\nc{\nor}{{\sf n}}  \nc{\mum}{/\!\!/} \nc{\kir}{/\!\!/\!\!/}
\nc{\Ri}{\tfrac{4\Ric_{\mu}}{||\mu||^2}} \nc{\ds}{\displaystyle}
\nc{\ben}{\begin{enumerate}} \nc{\een}{\end{enumerate}} \nc{\f}{\frac}
\nc{\lb}{[\cdot,\cdot]} \nc{\isn}{\tfrac{1}{||v||^2}}
\nc{\gkp}{(\ggo=\kg\oplus\pg,\ip)} \nc{\ukh}{(\ug=\kg\oplus\hg,\ip)}
\nc{\tgkp}{(\tilde{\ggo}=\kg\oplus\pg,\ip)}
\nc{\wt}{\widetilde} 
\nc{\iop}{\mathtt{i}} \nc{\jop}{\mathtt{j}}
\nc{\Hess}{\operatorname{Hess}} \nc{\ad}{\operatorname{ad}}
\nc{\Ad}{\operatorname{Ad}} \nc{\rank}{\operatorname{rk}}
\nc{\Irr}{\operatorname{Irr}} \nc{\End}{\operatorname{End}}
\nc{\Aut}{\operatorname{Aut}} \nc{\Inn}{\operatorname{Inn}}
\nc{\Der}{\operatorname{Der}} \nc{\Ker}{\operatorname{Ker}}
\nc{\Iso}{\operatorname{Iso}} \nc{\Diff}{\operatorname{Diff}}
\nc{\Lie}{\operatorname{L}} \nc{\tr}{\operatorname{tr}} \nc{\dif}{\operatorname{d}}
\nc{\sen}{\operatorname{sen}} \nc{\modu}{\operatorname{mod}}
\nc{\CRic}{\operatorname{PP}} \nc{\Cric}{\operatorname{P}} \nc{\Ricci}{\operatorname{Ric}}
\nc{\sym}{\operatorname{sym}} \nc{\herm}{\operatorname{herm}} \nc{\symac}{\operatorname{sym^{ac}}}
\nc{\symc}{\operatorname{sym^{c}}} \nc{\scalar}{\operatorname{scal}}
\nc{\grad}{\operatorname{grad}} \nc{\ricci}{\operatorname{Rc}}
\nc{\Nor}{\operatorname{Norm}}  \nc{\ricc}{\operatorname{Rc^{c}}}
\nc{\Ricc}{\operatorname{Ric^{c}}} \nc{\ricac}{\operatorname{Rc^{ac}}}
\nc{\Ricac}{\operatorname{Ric^{ac}}} \nc{\Riem}{\operatorname{Rm}} \nc{\Sec}{\operatorname{Sec}}
\nc{\riccig}{\operatorname{ric^{\gamma}}} \nc{\mm}{\operatorname{m}}
\nc{\Le}{\operatorname{L}} \nc{\tang}{\operatorname{T}}
\nc{\level}{\operatorname{level}} \nc{\rad}{\operatorname{r}}
\nc{\abel}{\operatorname{ab}} \nc{\CH}{\operatorname{CH}} \nc{\Cone}{{\mathcal C}} \nc{\CCone}{\operatorname{CC}} \nc{\CP}{{\mathcal P}}
\nc{\mcc}{\operatorname{mcc}} \nc{\Adj}{\operatorname{Adj}}
\nc{\Order}{\operatorname{O}}  \nc{\inj}{\operatorname{inj}} \nc{\proy}{\operatorname{pr}}
\nc{\vol}{\operatorname{vol}} \nc{\Diag}{\operatorname{Dg}} \nc{\Diagg}{\operatorname{Diag}}
\nc{\Spec}{\operatorname{Spec}} \nc{\Ima}{\operatorname{Im}} \nc{\Rea}{\operatorname{Re}}
\nc{\spann}{\operatorname{span}} \nc{\Aff}{\operatorname{Aff}}
\theoremstyle{plain}
\newtheorem{theorem}{Theorem}[section]
\newtheorem{proposition}[theorem]{Proposition}
\newtheorem{lemma}[theorem]{Lemma}
\theoremstyle{definition}
\newtheorem{definition}[theorem]{Definition}
\newtheorem{question}[theorem]{Question}
\newtheorem{conjecture}[theorem]{Conjecture}
\theoremstyle{remark}
\newtheorem{remark}[theorem]{Remark}
\newtheorem{example}[theorem]{Example}
\title{On Ricci negative Lie groups}
 \author{Jorge Lauret} \author{Cynthia E. Will}
\address{FaMAF, Universidad Nacional de C\'ordoba and CIEM, CONICET (Argentina)}
\email{lauret@famaf.unc.edu.ar} \email{cwill@famaf.unc.edu.ar}
\thanks{This research was partially supported by grants from FONCyT and SeCyT (UNC)}
\begin{document}

\maketitle

\begin{abstract}
We give an overview of what is known on Lie groups admitting a left-invariant metric of negative Ricci curvature, including many natural questions and conjectures in the solvable case.  We also introduce an open and convex cone $\cca(\ngo)$ of derivations attached to each nilpotent Lie algebra $\ngo$, which is defined as the image of certain moment map and parametrizes a set of solvable Lie algebras with nilradical $\ngo$ admitting Ricci negative metrics.  
\end{abstract}

\tableofcontents

\section{Introduction}\label{intro}

The main objects of study in this paper can be introduced as generalizations of the Euclidean space as follows: Riemannian manifolds which are also groups and left translations are all isometries.  This somewhat naive presentation of Lie groups endowed with left-invariant metrics could be hiding the rich variety of possible curvature behaviors this class of metrics exhibits, as shown by Milnor in his seminal paper \cite{Mln}.    

Given a Lie group $G$, it is natural to expect a nice and strong interplay between any prescribed curvature condition of left-invariant Riemannian metrics on $G$ and not only the topology of $G$ but also the algebraic structure of its Lie algebra $\ggo$.  Structurally, the only pinching curvature behavior which is still not understood in the homogeneous case is $\Ricci<0$ (see e.g.\ the overview given in \cite[Introduction]{NklNkn}).  Indeed, the following natural problem is wide open and represents our leitmotiv in this paper.  

\begin{question}\label{MQ}
Which Lie groups admit a left-invariant metric of negative Ricci curvature?  
\end{question}

We recall that in the general case there are no topological obstructions on a differentiable manifold $M$ to the existence of a complete Riemannian metric with $\Ricci<0$ (see \cite{Lhk}).

After surveying the subject in Sections \ref{his-sec} and \ref{deg-sec}, we propose several natural questions and conjectures in Section \ref{RN-sec} in the context of solvable Lie groups.  Finally, some new results are proved in Section \ref{cone-sec}, including the construction of an open and convex cone $\cca(\ngo)$, attached to each nilpotent Lie algebra $\ngo$, parametrizing a subset (conjectured to be essentially the full set) of one-dimensional solvable extensions of $\ngo$ admitting $\Ricci<0$ metrics.

\vs \noindent {\it Acknowledgements.} The authors are very grateful to Romina Arroyo and Emilio Lauret for very helpful comments.

\section{Some history}\label{his-sec}

In what follows, we overview the rich history of the existence problem for negatively curved metrics on Lie groups (from now on, any metric on a Lie group is assumed to be left-invariant).

\subsection{Negative sectional} 
We start with four theorems concerning scalar and sectional curvature.  

\begin{theorem}\label{scal}\cite{Mln} 
Any non-abelian Lie group admits $\scalar<0$ metrics.  Moreover, any non-flat metric on a solvable Lie group has $\scalar<0$.   
\end{theorem}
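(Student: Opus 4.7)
The plan is to apply Milnor's explicit formula for the scalar curvature $\scalar(\ip)$ of a left-invariant metric on $G$. In any orthonormal basis $\{e_i\}$ of $\ggo$, this formula writes $\scalar(\ip)$ as a polynomial in the structure constants, and it decomposes into (i) a bracket-norm term $-\tfrac14\sum_{i,j}\|[e_i,e_j]\|^2$, (ii) a mean-curvature term involving the vector $H\in\ggo$ defined by $\la H,X\ra=\tr\ad X$, and (iii) a cyclic/Killing-form trace term. Pieces (i) and (ii) are manifestly non-positive, while (iii) may change sign and encodes most of the difficulty.

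For the solvable case (second statement), I would manipulate (iii) using Cartan's criterion $B(\ggo,[\ggo,\ggo])=0$. Choosing the basis adapted to the nilradical $\nca\subset\ggo$, the contributions of basis vectors lying in $\nca$ simplify, since $\ad_X$ is nilpotent for $X\in\nca$, and the remaining contributions from $\nca^\perp$ combine with (i) and (ii) to give a non-positive expression. The conclusion would be $\scalar(\ip)\le 0$ for every solvable $\ggo$, with equality requiring simultaneous vanishing of $H$, of brackets within $\nca$, and of further eigenvalue data. By Milnor's classification of flat left-invariant metrics on solvable groups (which characterizes them by abelian commutator and skew-adjoint $\ad$-action), these equality conditions coincide with flatness, giving the claim.

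For the existence of $\scalar<0$ metrics on any non-abelian $G$ (first statement), I would split along the Levi decomposition $\ggo=\rg\oplus\sg$. If $\ggo$ is non-abelian solvable, the second statement already yields the desired metric, because flatness imposes strong algebraic constraints on $(\ggo,\ip)$ and so a generic inner product is non-flat. If $\sg\ne 0$, I would construct the metric by an anisotropic one-parameter deformation: fix a non-abelian solvable subalgebra $\hg\subset\ggo$ (for example a copy of $\affg(\RR)$ inside a simple factor of $\sg$) and rescale so that $\hg$ becomes large while its orthogonal complement shrinks; one then expects the negative contributions of the fixed $\hg$-block to dominate the positive Killing-form contribution coming from $\sg$.

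The hardest step will be the last one: controlling the competition between the strongly positive Killing-trace contribution from a compact simple factor and the negative pieces of Milnor's formula. Exhibiting a concrete deformation in which the negative terms win requires a careful choice adapted to the Levi decomposition, and is the main technical obstacle.
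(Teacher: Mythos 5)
Your strategy differs from the one this paper attributes to Milnor, and it has a genuine gap precisely at the step you yourself flag as the ``main technical obstacle'': producing a $\scalar<0$ metric when the Levi factor is nonzero. The concrete fix you propose --- rescaling a copy of $\affg(\RR)$ sitting inside a simple factor --- fails whenever that factor is compact: in a compact simple Lie algebra every $\ad{X}$ has purely imaginary spectrum, so there are no elements with $[X,Y]=Y$ and hence no copy of $\affg(\RR)$ (indeed $\sug(2)$ has no two-dimensional subalgebras at all, since $[X,Y]$ is always Killing-orthogonal to $\spann\{X,Y\}$). This is exactly the case where the Killing-form contribution to $\scalar$ is most positive, so the proposal as written does not close. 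Two softer gaps: in the solvable case, the assertion that the $\ngo^\perp$-contributions ``combine with (i) and (ii) to give a non-positive expression'' is the entire content of Milnor's computation and cannot be taken for granted (the Killing form of a solvable Lie algebra need not be negative semidefinite); and for the first statement on a non-abelian solvable group you still owe an argument that some inner product is non-flat, rather than appealing to genericity.

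The paper's route (Section \ref{deg-sec}) sidesteps all of this: one shows that every non-abelian Lie bracket degenerates to exactly one of $\mu_{heis}$ or $\mu_{hyp}$, both of which visibly admit $\scalar<0$ metrics, and then the Degeneration Principle (Proposition \ref{deg-pin}) transports the open condition $\scalar<0$ back to the original group. For instance, for $\sug(2)$ the rescaled basis $e_1'=te_1$, $e_2'=te_2$, $e_3'=t^2e_3$ has structure constants converging to those of the Heisenberg algebra as $t\to 0$, which handles the compact simple case that defeats your construction. Your ``anisotropic one-parameter deformation'' is morally a degeneration in disguise, but to make it work uniformly you must choose the contraction according to the local bracket structure (Heisenberg-type when $[X,Y]\notin\spann\{X,Y\}$, hyperbolic-type otherwise), not according to the Levi decomposition.
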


\begin{theorem}\label{sec}\cite{Hnt,AznWls}  
A Lie group admits $\Sec<0$ metrics if and only if, 
\begin{enumerate}[{\rm (i)}]
\item $\dim{\ngo}=\dim{\sg}-1$, where $\sg$ denotes the corresponding Lie algebra and $\ngo$ is the {\it nilradical} of $\sg$ (i.e.\ the maximal nilpotent ideal); in particular, $\sg$ is solvable.

\item There is an element $Y\in\sg$ such that $\ad{Y}|_{\ngo}>0$ (i.e.\ its eigenvalues have all positive real parts).  
\end{enumerate}
\end{theorem}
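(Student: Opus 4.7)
The plan splits naturally along the two implications of the ``if and only if.''

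\textbf{Sufficiency.} Given (i) and (ii), I would carry out the explicit Heintze-type construction. Write $\sg = \RR Y \oplus \ngo$ as a semidirect sum, with $Y$ acting on $\ngo$ by $A:=\ad{Y}|_\ngo$. Since all eigenvalues of $A$ have positive real part, a standard Lyapunov argument produces an inner product on $\ngo$ for which the symmetric part $\unm(A+A^t)$ is positive definite. Extend to $\sg$ by declaring $Y \perp \ngo$ with $|Y|=1$, and then replace $Y$ by $cY$ with $c$ large. With this inner product I would apply Milnor's sectional curvature formula for left-invariant metrics and verify $\Sec<0$ plane by plane: for the plane spanned by $Y$ and a unit $X\in\ngo$, the leading term is governed by $-\la (A+A^t) X, X\ra$, which is strictly negative; for planes lying entirely in $\ngo$, the usual contributions from brackets in $\ngo$ are uniformly bounded, while rescaling $Y$ by a large factor $c$ amplifies the negative Heintze-type terms coming from $A$ until they dominate.

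\textbf{Necessity.} This splits into two phases. First, I would show $G$ must be solvable: by Cartan--Hadamard, $G$ is diffeomorphic to $\RR^n$, hence has no closed geodesics and no nontrivial compact subgroups. A non-solvable $G$ has a semisimple Levi factor which, after passing to an appropriate simple summand, contains either a compact subgroup like $\SO(2)$ (via a $\slg_2(\RR)$-triple) or a full compact form, producing orbits incompatible with the absence of closed geodesics, and in either case one exhibits a 2-plane of non-negative sectional curvature. Second, for a solvable $\sg$ with $\Sec<0$: the quotient $\sg/\ngo$ is abelian, and if it had dimension $\geq 2$ one could pick $Y_1,Y_2\in\sg$ independent modulo $\ngo$ with $[Y_1,Y_2]\in\ngo$. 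Using the left-invariant curvature formula, together with the fact that after subtracting suitable nilradical elements one can arrange a quasi-abelian 2-plane, one shows $\Sec(Y_1,Y_2)\geq 0$, a contradiction. Hence $\dim\sg/\ngo = 1$, giving (i). Finally, for any $Y \in \sg\setminus\ngo$, if some eigenvalue $\lambda$ of $\ad{Y}|_\ngo$ had $\Rea(\lambda)\leq 0$, then pairing $Y$ with a suitable real vector extracted from the corresponding (possibly complex) eigenspace would again yield a 2-plane of non-negative sectional curvature, forcing (ii).

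\textbf{Main obstacle.} The hardest step, in my view, is ruling out $\dim \sg/\ngo \geq 2$ in the solvable case. A lift to $\sg$ of a two-dimensional subspace of $\sg/\ngo$ is abelian only modulo $\ngo$, so it need not be a subalgebra and need not be totally geodesic; one must therefore carefully track the $\ngo$-valued corrections in the curvature formula and show they cannot conspire to restore negativity. This is essentially the role played by the Azencott--Wilson refinement of Heintze's original analysis, and the same circle of ideas is what converts non-positivity of $\Rea(\lambda)$ into non-negative sectional curvature in the last step of the argument.
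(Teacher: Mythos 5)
The paper does not actually prove this theorem; it is quoted from Heintze and Azencott--Wilson, and the only proof content recorded in the text is the sufficiency argument sketched in Section \ref{deg-sec}: deform the bracket along $\ad_{\mu_t}Y|_{\ngo}=t\,\ad Y|_{\ngo}$ and let $t\to\infty$. Your sufficiency paragraph is exactly that argument (rescaling $Y$ to $cY$ with $|cY|=1$ is the same deformation): choose a Lyapunov inner product so that $S(A)=\unm(A+A^t)>0$, and observe that the terms quadratic in the derivation, such as the Gram-determinant expression $-\bigl(\la S(A)X,X\ra\la S(A)Z,Z\ra-\la S(A)X,Z\ra^2\bigr)$ for planes inside $\ngo$, grow like $c^2$ while the contributions of the nilpotent bracket stay bounded. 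That half is sound in outline.

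The necessity direction, however, contains a genuine gap at its first step. A left-invariant metric and all of its curvature are determined by the metric Lie algebra, so existence of a $\Sec<0$ metric on $G$ is equivalent to existence on the universal cover; and under $\Sec<0$ the simply connected group is diffeomorphic to $\RR^n$ by Cartan--Hadamard \emph{regardless} of its algebraic structure. Hence ``no compact subgroups, no closed geodesics'' can never detect a Levi factor: $\widetilde{\Sl_2(\RR)}$ is diffeomorphic to $\RR^3$ and has no nontrivial compact subgroup, yet $\slg_2(\RR)$ must be excluded. The exclusion of non-solvable algebras has to be carried out at the level of the curvature formulas (Milnor/Heintze-type lemmas producing an explicit $2$-plane of non-negative curvature), not by topology, and your outline does not supply that argument. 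A second, smaller but real, error: your last step asserts that for \emph{any} $Y\in\sg\setminus\ngo$, an eigenvalue of $\ad Y|_{\ngo}$ with $\Rea(\lambda)\le 0$ yields a non-negative $2$-plane. Applied to $-Y$ this would show that no non-abelian solvable group carries $\Sec<0$ at all. The correct statement first fixes the orientation of $Y$ (say, the unit normal to $\ngo$ with $\tr\ad Y>0$) and only then shows every eigenvalue of that particular $\ad Y|_{\ngo}$ has positive real part; handling this sign ambiguity, together with the $\dim\sg/\ngo=1$ step you rightly flag, is where the Heintze--Azencott--Wilson analysis actually lives.
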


Thus the structure of a $\Sec<0$ Lie algebra is simply given by a nilpotent Lie algebra together with a positive derivation.   
Unfortunately, the question of which nilpotent Lie algebras admit a positive derivation seems to be hopeless.

\begin{theorem}\label{secp}\cite{EbrHbr} 
A (solvable) Lie group admits a metric such that $-4\leq\Sec\leq-1$ if and only if,
\begin{enumerate}[{\rm (i)}]
\item $\ngo$ is $2$-step nilpotent or abelian.  

\item There is an element $Y\in\sg$ such that the real part of any eigenvalue of $\ad{Y}|_{\ngo}$ belongs to the interval $[1,2]$.   

\item $\ad{Y}|_{\ngo}$ is given by $\alpha I$ when restricted to the generalized eigenspace of any eigenvalue $\alpha$ such that $\Rea(\alpha)=1$ or $2$.   
\end{enumerate}  
\end{theorem}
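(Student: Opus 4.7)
The strategy is to leverage Theorem \ref{sec} (Heintze) to reduce to a rigid algebraic setup and then extract the extra constraints forced by two-sided pinching. By that theorem, any $\Sec<0$ Lie group is solvable with $\sg=\ngo\oplus\ag$, $\ngo$ the nilradical, and an element $Y$ with $\ad Y|_\ngo$ positive. One first checks that a pinched metric descends to (and is characterized by its restriction to) the codimension-one solvable subalgebra $\sg_0=\ngo\oplus\RR Y$, so one may assume $\sg=\ngo\oplus\RR Y$ from the outset. After rescaling so that $|Y|=1$, the pinching interval $[-4,-1]$ matches the interval $[1,2]$ of the statement, and the problem reduces to analyzing how the eigenstructure of $D:=\ad Y|_\ngo$ together with the bracket on $\ngo$ determines sectional curvatures.

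For the necessity I would plug into Koszul's formula and compute two families of sectional curvatures. First, for $X$ a unit vector in the (real) generalized eigenspace of an eigenvalue $\alpha$ of $D$, an iteration/normalization argument along Jordan chains gives $\Sec(Y,X)\to-(\Rea\alpha)^2$, so pinching yields $\Rea\alpha\in[1,2]$, which is (ii). Next, if $\ngo$ were at least $3$-step, picking $X\in[\ngo,[\ngo,\ngo]]$ would place $X$ in an eigenspace whose eigenvalue has real part $\geq 3$, contradicting the upper bound on $|\Sec|$; this forces (i). For (iii), one examines the full curvature (not just leading order) for a unit $X$ in a Jordan block of an eigenvalue with $\Rea\alpha\in\{1,2\}$: the standard formula shows that any nontrivial nilpotent part in $D|_{V_\alpha}$, or any non-scalar symmetric part, produces a strictly negative correction to $\Sec(Y,X)$ that pushes it below $-1$ (resp.\ a positive correction pushing $\Sec$ above $-4$), so saturation at the boundary forces $D|_{V_\alpha}=\alpha I$.

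For the sufficiency, given $(\ngo,Y)$ satisfying (i)--(iii), I would construct the metric explicitly. Set $Y\perp\ngo$ with $|Y|=1$, decompose $\ngo$ according to the real generalized eigenspaces of $D$, and choose an inner product on $\ngo$ in which $D$ is as close to symmetric as (iii) permits (scalar on the extremal eigenspaces, Jordan-like on intermediate ones) and which makes the eigenspace decomposition $\ad$-orthogonal. Using that $\ngo$ is $2$-step, one has wide flexibility in choosing compatible inner products on $[\ngo,\ngo]$ relative to a complement, and a bracket-scaling trick (replacing $[\cdot,\cdot]_\ngo$ by $t[\cdot,\cdot]_\ngo$ for small $t>0$) damps the contribution of the bracket to sectional curvature, so for $t$ small the curvatures are $C^0$-close to the ones of the ``diagonal'' model $\Sec(Y,X)=-(\Rea\alpha)^2$. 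One then verifies $-4\leq\Sec\leq -1$ by a direct bound on the mixed terms, invoking (iii) exactly where the bound is tight.

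The hard part will be the boundary rigidity of (iii): away from the extremal eigenvalues the pinching admits slack, but on $V_1$ and $V_2$ one must show that no Jordan block, no symmetric non-scalar piece of $D|_{V_\alpha}$, and no bracket component of the form $[V_1,V_1]\subset V_2$ with a ``wrong'' symmetric structure can survive. This requires computing the curvature operator restricted to suitable $2$-planes to sufficient order and matching it against the rescaled model, and it is also the step that most genuinely uses the full $2$-step nilpotent structure (rather than only the eigenvalue data).
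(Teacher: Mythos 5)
The paper offers no proof of this statement: it is quoted verbatim from Eberlein--Heber \cite{EbrHbr}, where the argument occupies a large part of a long paper, so your proposal can only be judged on its own merits. As an outline it identifies the right ingredients (reduction via Theorem \ref{sec}, the weight-space argument deriving (i) from (ii) since a triple bracket would produce an eigenvalue of real part $\geq 3$, and the role of (iii) as boundary rigidity), but two of its central steps do not work as described. First, in the necessity of (ii) you assert $\Sec(Y,X)\to-(\Rea\alpha)^2$ for $X$ in the generalized eigenspace of $\alpha$. The spectrum of $\ad{Y}|_{\ngo}$ is a metric-independent invariant, whereas $\Sec(Y,X)$ for a \emph{given} pinched metric depends on the symmetric part of $\ad{Y}$ in that metric (indeed $Y$ need not even be orthogonal to $\ngo$ or of unit length), and for a non-normal $\ad{Y}$ these are genuinely different data. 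A pointwise Koszul computation therefore cannot extract the eigenvalue bounds; one needs the asymptotic/limiting analysis (or the structure theory of the isometry group of the pinched space) that constitutes the actual content of \cite{EbrHbr}, and "an iteration/normalization argument along Jordan chains" is not a substitute for it.

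Second, the sufficiency argument fails exactly at the boundary that makes the theorem delicate. The condition $-4\leq\Sec\leq-1$ is a \emph{closed} pinching, while your bracket-scaling trick only produces metrics whose curvatures are $C^0$-close to those of a diagonal model; for an eigenvalue with $\Rea(\alpha)=1$ the relevant planes have model curvature exactly $-1$, so "close to $-1$" may land above $-1$ and violate the upper bound. Thus an approximation argument can at best prove the open pinching $-4-\epsilon\leq\Sec\leq-1+\epsilon$, not the statement. Controlling the curvature \emph{exactly} on the extremal eigenspaces is where condition (iii) and the amalgamated-product structure (alluded to in the remark following the theorem) are really used, and your sketch acknowledges this as "the hard part" without supplying an argument. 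As it stands the proposal is a reasonable roadmap but not a proof.
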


It is further obtained in \cite{EbrHbr} that if $\ngo$ is non-abelian, then $\ad{Y}|_{\ngo}$ must actually be, up to amalgamated products, the canonical derivation of a $2$-step algebra given by $2I$ on the center and $I$ on its orthogonal complement.  

\begin{theorem}\label{sec0}\cite{AznWls,Wlf,Alk}  
A Lie group admits a $\Sec\leq 0$ metric if and only if 
\begin{enumerate}[{\rm (i)}]
\item $[\ag,\ag]=0$, where $\sg=\ag\oplus\ngo$ is the orthogonal decomposition; in particular, $\sg$ is solvable.

\item There is an element $Y\in\sg$ such that $\ad{Y}|_{\ngo}\geq 0$. 

\item For any $Y'\in\ag$, $\ad{Y'}|_{\ngo}$ coincides with $\alpha I$ when restricted to the generalized eigenspace of any eigenvalue $\alpha$ such that $\Rea(\alpha)=0$.  

\item Some extra technical conditions.  
\end{enumerate}
Moreover, any homogeneous Riemannian manifold with $\Sec\leq 0$ is isometric to a metric on a simply connected solvable Lie group.  
\end{theorem}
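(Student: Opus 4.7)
The plan is to combine the rigidity already present in Theorems~\ref{sec}--\ref{secp} with the classical fact (due to Azencott--Wilson, Wolf and Alekseevskii) that every simply connected Riemannian homogeneous space of non-positive sectional curvature admits a simply transitive solvable group of isometries. I would start by settling the ``moreover'' assertion: given a simply connected homogeneous $M=G/K$ with $\Sec\leq 0$, the Cartan--Hadamard theorem gives $M\simeq\RR^n$, and a de~Rham-type splitting separates a Euclidean factor from an indecomposable factor. On the indecomposable factor, an Iwasawa-type decomposition of a closed transitive subgroup of the isometry group supplies a simply transitive solvable subgroup after compact and semisimple stabilizers are ruled out by the curvature bound (either would produce a totally geodesic submanifold of positive sectional curvature). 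This reduces the whole classification to left-invariant metrics on simply connected solvable Lie groups.

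For the forward direction on the Lie-algebra level, let $\sg$ be solvable with nilradical $\ngo$ and set $\ag:=\ngo^\perp$, and work with Milnor's formula \cite{Mln} for the sectional curvature of a left-invariant metric. Since $[\sg,\sg]\subseteq\ngo$, we have $[\ag,\ag]\subseteq\ngo$, orthogonal to $\ag$; feeding an orthonormal pair $X,Y\in\ag$ into Milnor's formula, the mixed terms involving $\ad X, \ad Y$ along $\ag$ collapse and leave a dominant negative contribution proportional to $-\|[X,Y]\|^2$, which the hypothesis $\Sec\leq 0$ forces to vanish, yielding (i). For (ii), evaluating the sectional curvature of the $2$-plane spanned by $Y\in\ag$ and $X\in\ngo$ translates into a semi-definiteness constraint on the symmetric part of $\ad Y|_\ngo$, and a suitable averaging argument over $\ag$ produces a single $Y$ such that all eigenvalues of $\ad Y|_\ngo$ have non-negative real parts. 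For (iii), examining the Jordan decomposition of $\ad Y'|_\ngo$ along a generalized eigenspace of an eigenvalue of zero real part reveals that any non-trivial nilpotent part, or any non-scalar semisimple block there, produces a $2$-plane of positive sectional curvature. The sufficiency direction and the precise form of (iv) are then obtained by an explicit construction: choose $\ag$ abelian and orthogonal to $\ngo$, together with an inner product on $\ngo$ under which the commuting family $\{\ad Y'|_\ngo:Y'\in\ag\}$ is simultaneously as close to normal as possible, and verify $\Sec\leq 0$ plane by plane via Milnor.

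The main obstacle will be pinning down the exact form of (iv) and proving sufficiency: in \cite{Alk} this takes the form of intricate compatibility hypotheses on the commuting family of adjoint actions, ensuring that off-diagonal interactions between distinct generalized eigenspaces do not reintroduce positive-curvature directions, and the corresponding term-by-term verification of Milnor's formula requires a delicate case analysis. A secondary challenge is the ``moreover'' assertion, where excluding non-solvable simply transitive isometry groups on an indecomposable $\Sec\leq 0$ factor calls on the structure theory of semisimple Lie groups acting on symmetric spaces of non-compact type.
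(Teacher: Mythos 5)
There is nothing in the paper to compare against: Theorem~\ref{sec0} is quoted in the survey section from \cite{AznWls,Wlf,Alk}, with item (iv) deliberately left as ``some extra technical conditions,'' and the authors give no proof. So your proposal has to be judged against the cited literature, and as it stands it has concrete gaps rather than being a complete alternative argument.

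The most serious one is your derivation of (i). In Milnor's formula the commutator of an orthonormal pair $X,Y$ enters through the term $-\tfrac{3}{4}\,\|[X,Y]\|^2$, which is a \emph{nonpositive} contribution to $\la R(X,Y)Y,X\ra$; the hypothesis $\Sec\leq 0$ only forces \emph{positive} contributions to be cancelled, so it does not force $[X,Y]=0$. (Moreover, for $X,Y\in\ag$ one has $[X,Y]\in\ngo\perp\ag$, so the terms $\la[X,[X,Y]],Y\ra$ vanish and what survives is the $U$-terms minus $\tfrac{3}{4}\|[X,Y]\|^2$ --- nothing there obstructs $[\ag,\ag]\neq 0$ on a single $2$-plane.) Obtaining $[\ag,\ag]=0$ is genuinely part of the Azencott--Wilson structure theory and cannot be read off one sectional curvature. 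A second error is in the ``moreover'' part: compact stabilizers cannot be ``ruled out by the curvature bound,'' since $\RR H^n=\SO^+(n,1)/\SO(n)$ has compact isotropy and $\Sec<0$. The actual content there is that the full isometry group of a homogeneous Hadamard-type manifold \emph{contains} a simply transitive solvable subgroup, which is an algebraic analysis of transitive isometry groups (Wolf, Azencott--Wilson), not a geodesic-submanifold obstruction. Finally, you correctly identify that the precise form of (iv) and the sufficiency verification are the hard part, but you do not supply them; since that is exactly where the substance of the theorem lies, the proposal is a roadmap rather than a proof.
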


\subsection{Negative Ricci, non-solvable case}
Let us now consider the curvature behavior addressed in Question \ref{MQ}: $\Ricci<0$.  Unlike negative sectional curvature, non-solvable Lie groups come into play in the Ricci negative case. 

\begin{theorem}\label{unims}\cite{Dtt}  
No unimodular solvable Lie group can admit $\Ricci<0$ metrics. 
\end{theorem}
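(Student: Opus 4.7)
The plan is to assume, for contradiction, that a unimodular solvable Lie group $G$ admits a left-invariant metric $\ip$ with $\Ricci<0$, and to exhibit a nonzero vector $Z\in\sg:=\Lie(G)$ with $\Ricci(Z,Z)\geq 0$.  The abelian case is immediate, since the metric is then flat, so we may assume $\sg$ is non-abelian; then its nilradical $\ngo$ is nonzero.  Write $\sg=\ngo\oplus\ag$ orthogonally, and note that since $[\sg,\sg]\subseteq\ngo$, unimodularity reduces to $\tr(\ad Y|_\ngo)=0$ for every $Y\in\ag$.  Equivalently, the mean vector $U\in\sg$ defined by $\la U,X\ra=\tr\ad X$ vanishes, so the general Ricci formula
$$\Ricci(X,X)=-\tfrac12\sum_i|[X,X_i]|^2+\tfrac14\sum_{i,j}\la[X_i,X_j],X\ra^2-\tfrac12 B(X,X)$$
(with $\{X_i\}$ any orthonormal basis of $\sg$ and $B$ the Killing form) applies without the usual $U$-correction terms.

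I would then specialize to $Z$ in the center $\zg(\ngo)$, which is nonzero because $\ngo$ is a nonzero nilpotent ideal.  Choose orthonormal bases $\{e_l\}$ of $\ngo$ and $\{f_k\}$ of $\ag$, and set $A_k:=\ad f_k|_\ngo\in\End(\ngo)$.  The conditions $[Z,\ngo]=0$ and $B(Z,Z)=\tr(\ad Z)^2=0$ (since $\ad Z$ is nilpotent) collapse the formula to
$$\Ricci(Z,Z)=\tfrac12\sum_k\bigl(\|A_k^*Z\|^2-\|A_kZ\|^2\bigr)+\tfrac14\sum_{l,m}\la[e_l,e_m],Z\ra^2+\tfrac14\sum_{k,k'}\la[f_k,f_{k'}],Z\ra^2.$$
The last two sums are manifestly $\geq 0$, but the first is a priori sign-indefinite for a single $Z$, since $\|A^*Z\|^2-\|AZ\|^2=\la[A,A^*]Z,Z\ra$ may have either sign.

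The key estimate, and the main obstacle, is to show that on average over $\zg(\ngo)$ the first term is non-negative.  Summing $\Ricci(Z_i,Z_i)$ over an orthonormal basis $\{Z_i\}$ of $\zg(\ngo)$, the plan is to prove the identity
$$\sum_i\bigl(\|A_k^*Z_i\|^2-\|A_kZ_i\|^2\bigr)=\sum_l\|PA_kW_l\|^2\;\geq\;0,$$
where $P\colon\ngo\to\zg(\ngo)$ is the orthogonal projection and $\{W_l\}$ is an orthonormal basis of $\zg(\ngo)^\perp\cap\ngo$.  This follows by computing $\|PA_k\|_{\mathrm{HS}}^2$ in two ways: via cyclicity of the trace it equals $\sum_i\|A_k^*Z_i\|^2$, and directly via the basis $\{Z_i\}\cup\{W_l\}$ of $\ngo$ it equals $\sum_i\|PA_kZ_i\|^2+\sum_l\|PA_kW_l\|^2$; since $A_k$, being a derivation of $\ngo$, preserves the characteristic ideal $\zg(\ngo)$, one has $PA_kZ_i=A_kZ_i$, and the first summand equals $\sum_i\|A_kZ_i\|^2$.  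Combining the three non-negative contributions yields $\sum_i\Ricci(Z_i,Z_i)\geq 0$, so some $\Ricci(Z_i,Z_i)\geq 0$, contradicting $\Ricci<0$.
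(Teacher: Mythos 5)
Your proof is correct: the reduction to a vanishing mean‑curvature vector via unimodularity, the localization at the center $\zg(\ngo)$ of the nilradical (where $B(Z,Z)=0$ and $[Z,\ngo]=0$), and the trace identity $\sum_i\bigl(\|A_k^*Z_i\|^2-\|A_kZ_i\|^2\bigr)=\sum_l\|PA_kW_l\|^2\geq 0$ — which relies on the correct observation that each $A_k=\ad f_k|_{\ngo}$ is a derivation of $\ngo$ and hence preserves the characteristic ideal $\zg(\ngo)$ — all check out, yielding $\sum_i\Ricci(Z_i,Z_i)\geq 0$ and the desired contradiction. The paper states this theorem with a citation to \cite{Dtt} and gives no proof, but your argument essentially reconstructs Dotti's original one, which likewise shows that the Ricci form has non‑negative trace on the center of the nilradical.
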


\begin{theorem}\label{unim}\cite{DttLtMtl}  
The only unimodular Lie groups that can admit $\Ricci<0$ metrics are the non-compact semisimple ones. 
\end{theorem}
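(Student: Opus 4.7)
The statement has two directions. For sufficiency, I would show that any semisimple Lie group with no compact factor admits a metric with $\Ricci<0$. The natural candidate comes from the Cartan decomposition $\ggo=\kg\oplus\pg$: since $B|_{\kg}<0$ and $B|_{\pg}>0$ for the Killing form $B$, the bilinear form $-B|_{\kg}+B|_{\pg}$ is a positive definite inner product on $\ggo$, and a direct Ricci computation (possibly after a rescaling between the $\kg$-- and $\pg$--factors) produces $\Ricci<0$. This part is essentially classical and I would invoke it from the literature.

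For necessity, suppose $\ggo$ is unimodular and admits a metric with $\Ricci<0$, and take the Levi decomposition $\ggo=\sg\ltimes\rg$, with $\sg$ semisimple and $\rg$ the solvable radical. Since $\rg$ is an ideal, $\ad{X}(\sg)\subseteq\rg$ for every $X\in\rg$, so in the block form on $\ggo=\sg\oplus\rg$ one has $\tr\ad_\ggo{X}=\tr\ad_\rg{X}$; hence $\rg$ is itself unimodular. Two things must then be shown: $\rg=0$, and the semisimple algebra $\ggo=\sg$ has no compact ideal. For the latter, if $\ggo=\kg\oplus\hg$ were a nontrivial decomposition as ideals with $\kg$ compact semisimple, then $B|_{\kg}$ would be negative definite, and in the unimodular Ricci formula
\[
\Ricci(X,X)=-\tfrac{1}{2}\sum_i\|[X,e_i]\|^2+\tfrac{1}{4}\sum_{i,j}\la[e_i,e_j],X\ra^2-\tfrac{1}{2}B(X,X),
\]
the term $-\tfrac{1}{2}B(X,X)$ is strictly positive for $X\in\kg\setminus\{0\}$. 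Combined with the fact that $\ad{X}$ preserves the ideal $\kg$ and a judicious choice of orthonormal basis $\{e_i\}$ adapted to $\kg$, one should conclude $\Ricci(X,X)\ge 0$ for a suitable $X\in\kg$, a contradiction.

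The main obstacle is showing $\rg=0$, which amounts to extending Theorem \ref{unims} from purely solvable unimodular algebras to unimodular algebras admitting a nontrivial Levi decomposition. Following the philosophy of Theorem \ref{unims}, I would look for a distinguished $X\in\ngo$ (the nilradical of $\ggo$, contained in $\rg$ and nonzero whenever $\rg\ne 0$) on which $\Ricci(X,X)\ge 0$. Choosing $X$ in the center of $\ngo$ is natural: many brackets $[X,e_i]$ simplify, and $B(X,X)=0$ by Cartan's criterion since $X\in\ngo$. Plugging into the unimodular Ricci formula above, the basis vectors lying in $\rg$ reproduce the terms controlled by Theorem \ref{unims}, while new contributions arise from the basis vectors transverse to $\rg$, i.e.\ those carrying the action of $\sg$ on $\rg$. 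The hard step is to verify that these new contributions do not spoil the non-negativity, since the $\sg$--action on $\rg$ need not respect any obvious metric decomposition; one expects that $X$ within the center of $\ngo$, together with the orthogonal complement of $\rg$, must be chosen carefully so that the sign of the additional terms works in our favor.
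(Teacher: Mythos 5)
The paper offers no proof of this statement---it is quoted verbatim from \cite{DttLtMtl}---so the comparison is necessarily with that source and with the surrounding discussion in Section 2. Before any technical issue arises, your proposal misreads the theorem: it is a pure obstruction result (unimodular and $\Ricci<0$ forces the group to be semisimple and non-compact), not an equivalence. The ``sufficiency'' direction you set out to prove is false, and the paper says so explicitly: $\Sl_2(\RR)$ is non-compact simple and admits no $\Ricci<0$ metric \cite{Mln}, and existence is still open for $\Sl_2(\CC)$, $\Spe(2,\RR)$ and $G_2^*$. That whole paragraph, including the candidate metric $-B|_{\kg}+B|_{\pg}$, must go. Likewise, the claim that the semisimple algebra has no compact ideal is not part of this 1984 theorem; it is the separate and much later result of \cite{JblPtr} quoted two bullets below. (The ``non-compact'' in the statement only rules out compact groups, which is immediate: a compact homogeneous space with $\Ricci<0$ admits no nontrivial Killing field by Bochner, contradicting transitivity.)

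For the direction that actually constitutes the theorem, your outline points the right way (Levi decomposition, unimodularity of the radical, an element of $\zg(\ngo)$, the unimodular Ricci formula with $B(X,X)=0$ for $X$ in the nilradical), but the step you yourself flag as ``hard'' is the entire content of the proof and is left open. Moreover, the pointwise strategy---find one good $X\in\zg(\ngo)$ with $\Ricci(X,X)\ge 0$---is not how the argument of Dotti and Dotti--Leite--Miatello closes: for a fixed $X$ the two competing sums $-\tfrac12\sum_i|[X,e_i]|^2$ and $\tfrac14\sum_{i,j}\la[e_i,e_j],X\ra^2$ admit no a priori comparison. The standard mechanism is an averaging one: $\zg(\ngo)$ is an ideal of $\ggo$ (characteristic in the characteristic ideal $\ngo$), and one computes the trace of $\Ricci$ over an orthonormal basis of $\zg(\ngo)$; unimodularity kills the mean-curvature contribution $\tr\ad{Y}$, and the remaining terms reorganize after summation into a manifestly non-negative quantity. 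This is exactly the unimodular case of the necessary condition of \cite{NklNkn} recalled in Section 2.3 (there must exist $Y$ with $\tr\ad{Y}>0$, impossible when every $\ad{Y}$ is traceless). Since a non-semisimple Lie algebra has $\ngo\ne 0$ (the last nonzero derived term of the radical is an abelian, hence nilpotent, ideal), this contradicts $\Ricci<0$. Without the passage from a single $X$ to the trace over $\zg(\ngo)$, your argument does not go through.
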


The following is a list of some of the known results, constructions and examples in the non-solvable case.  

\begin{enumerate}[{\small $\bullet$}]
\item \cite{DttLt}  $\Sl_n(\RR)$ admits $\Ricci<0$ metrics for any $n\geq 3$.  The corresponding compact quotients $\Sl_n(\RR)/\Gamma$ by any lattice $\Gamma$ gave the first examples of compact manifolds admitting both a $\scalar>0$ metric and a $\Ricci<0$ metric, answering a question in \cite[pp. 242]{Yau}.   
\item[ ]
\item \cite{DttLtMtl}  Most non-compact simple Lie groups do admit $\Ricci<0$ metrics, with some low dimensional exceptions, including $\Sl_2(\CC)$, $\Spe(2,\RR)$ and $G_2^*$.  The existence of $\Ricci<0$ metrics on these simple Lie groups is still open. 
\item[ ]
\item \cite{Mln} $\Sl_2(\RR)$ does not admit $\Ricci<0$ metrics.  This is actually the only simple Lie group for which the non-existence of $\Ricci<0$ metrics has been proved.  Moreover, we do not know of any example of a non-unimodular non-solvable Lie group $G$ such that $G$ admits no $\Ricci<0$ metric.
\item[ ]
\item \cite{JblPtr}  A semisimple Lie group admitting a metric with $\Ricci<0$ can not have compact factors, i.e.\ it is of non-compact type.  On the other hand, we do not know whether the existence of $\Ricci<0$ metrics on a semisimple Lie group implies the existence of such metrics on each of its factors.  
\item[ ]
\item \cite{Wll1, Wll2} Unexpected examples of Lie groups admitting $\Ricci<0$ metrics which are neither semisimple nor solvable.  Moreover, the Levi factors of some of these examples are compact, including $\SU(n)$ ($n\geq 2$), $\SO(n)$ ($n\geq 3$) and $\Spe(n)$ ($n\geq 2$).  
\item[ ]
\item \cite{Wll2}  Any non-compact semisimple Lie group admitting a $\Ricci<0$ metric can be the Levi factor of a non-semisimple Lie group with a $\Ricci<0$ metric.  Non-abelian nilradicals are possible in most of these  constructions.  
\item[ ]
\item \cite{Wll1} For any (non-trivial) real representation $W$ of $\sug(2)$, the Lie algebra 
$$
\ggo=(\sug(2)\oplus\RR)\ltimes W
$$ 
admits $\Ricci<0$ metrics, where $\RR$ is acting on $W$ by possibly different multiples of the identity on each irreducible component.  This produces the example of lowest possible dimension of a $\Ricci<0$ non-semisimple and non-solvable Lie algebra: $\ggo=(\sug(2)\oplus\RR)\ltimes\RR^3$, $\dim{\ggo}=7$. 
\item[ ]
\item \cite{LrtWll}  Any compact semisimple Lie group can be the Levi factor of a Lie group admitting a $\Ricci<0$ metric.  Moreover, given a compact simple Lie algebra $\ug$, all but finitely many irreducible representations of its complexification $\ug\otimes\CC$ (including the adjoint representation) can be the abelian nilradical of a Lie algebra $\ggo$ with Levi factor $\ug$ admitting $\Ricci<0$ metrics.  We have listed in Table \ref{dims}, for each $\ug$, the example of lowest dimension known of such a $\ggo=(\ug\oplus\RR)\ltimes V$, where $(V,\pi)$ is the (complex) irreducible representation of $\ug\otimes\CC$ viewed as a real representation of $\ug$; in particular 
$$
\dim{\ggo}=\dim{\ug}+1+2\dim_\CC{V}.
$$   
In Table \ref{dims}, the fundamental representations of  $\ug\otimes\CC$ are denoted by $\omega_1,\omega_2,\dots$ and so on in the order used by the software package LiE.  
%and $\sum\omega_i$ means the sum of all of them; in the first four exceptional cases, $\dim_\CC{V}$ is respectively given by  
%$$
%\begin{array}{c}
%n_6:=68719476736, \quad n_7:=9223372036854775808, \\
% n_8:=1329227995784915872903807060280344576, \quad n_4:=16777216.  
%\end{array}
%$$
%This was computed using the software package LiE.  
\end{enumerate}

{\small 
\begin{table}
\caption{Smallest dimension known of a $\Ricci<0$ Lie algebra $\ggo=(\ug\oplus\RR)\ltimes V$ with a given compact simple Levi factor $\ug$.} \label{dims}
\begin{tabular}{|c|c|c|c|c|}
\hline 
&&&& \\ 
\text{Type} & $\mathfrak{u}$ & $\dim{\ug}$ & $\pi$ & $\dim{\ggo}$  \\
&&&& \\ \hline \hline 
&&&& \\
$\textup{A}_n,\, n\geq 2$ & $\mathfrak{su}(n+1)$ & $n(n+2)$ & $2\omega_1=Sym^2(\CC^{n+1})$ & $2n^2+5n+3$
\\ &&&& \\ 
\hline
&&&& \\ 
$\textup{B}_n,\,n \ge 2$ &$ \mathfrak{so}(2n+1)$ & $n(2n+1)$ & $  \omega_2=\ad$ &$  6n^2+3n+1 $ 
\\ &&&& 
\\ 
\hline
&&&& \\ 
$\textup{C}_n,\,n\geq 3 $ &$ \mathfrak{sp}(n) $ & $n(2n+1)$ & $  2\omega_1 = \ad $ &$ 6n^2+3n+1 $ 
\\ &&&& \\
\hline
&&&& \\ 
$\textup{D}_n,\,n \ge 4$ &$ \mathfrak{so}(2n) $ & $n(2n-1)$ & $  \omega_2=\ad $ &$ 6n^2-3n+1$ 
\\ &&&& \\
\hline
&&&& \\ 
$\textup{E}_{6} $ &$ E_6 $ &  $78$ &$\omega_2=\ad $  & $235$ 
\\ &&&& 
\\
\hline
&&&& \\ 
$\textup{E}_{7} $ &$ E_7 $ & $133$ & $\omega_1=\ad $ & $400$ 
\\ &&&& 
\\
\hline
&&&& \\ 
$\textup{E}_{8} $ &$E_8  $ & $248$ & $\omega_8=\ad $ & $745$ 
\\ &&&& 
\\
\hline
&&&& \\ 
$\textup{F}_{4} $ &$F_4$ & $52$ & $\omega_1=\ad $ & $157$  
\\ &&&& 
\\
\hline
&&&& \\ 
$\textup{G}_{2} $ &$ G_2  $ & $14$ & $ \omega_2 = \ad $ & 43 
\\ &&&& \\ \hline 
\end{tabular}
\end{table}
}

\subsection{Negative Ricci, solvable case}
We now focus on the existence of Ricci negative metrics in the solvable case.  Note that unimodular solvable (in particular nilpotent) Lie groups are not allowed in the Ricci negative case by Theorem \ref{unims}.  Recall the orthogonal decomposition $\sg=\ag\oplus\ngo$, where $\ngo$ is the nilradical of the solvable Lie algebra $\sg$.  

\begin{enumerate}[{\small $\bullet$}]
\item \cite{NklNkn}  {\it Sufficient condition}: There exists $Y\in\sg$ such that $\ad{Y}|_\ngo>0$ (recall that this means $\Rea\Spec(\ad{Y}|_\ngo)>0$).  Note that the nilradicals involved in these examples are the same as those needed for $\Sec<0$ (cf.\ Theorem \ref{sec}), although the condition $[\ag,\ag]=0$ is not mandatory here as in the case of $\Sec\leq 0$ (cf.\ Theorem \ref{sec0}).  
\item[ ]
\item \cite{NklNkn} {\it Necessary condition}: There exists $Y\in\sg$ such that $\tr{\ad{Y}}>0$ and $\ad{Y}|_{\zg(\ngo)}>0$, where $\zg(\ngo)$ is the center of $\ngo$.  Surprisingly (or not), this is the only known general obstruction so far.  
\end{enumerate}

For a given $D\in\Der(\ngo)$, we consider its additive Jordan decomposition,
$$
D=D^{\RR}+D^{\im\RR}+D^n,  \qquad [D^{\RR},D^{\im\RR}]=0, \quad [D^\RR,D^n]=0, \quad [D^{\im\RR},D^n]=0,
$$
where $D^{\RR}$ is diagonalizable (over $\RR$), $D^{\im\RR}$ is diagonalizable over $\CC$ with only imaginary eigenvalues and $D^n$ is nilpotent.  It is well known that $D^{\RR},D^{\im\RR},D^n\in\Der(\ngo)$.  A maximal abelian subspace $\tg(\ngo)$ of diagonalizable (over $\RR$) derivations is called a {\it maximal torus} of $\ngo$, which is known to be unique up to $\Aut(\ngo)$-conjugation.  

In the light of the above and some other results in the particular cases of Heisenberg and filiform Lie algebras as nilradicals obtained in \cite{NklNkn,Nkl}, a complete characterization of solvable Lie algebras admitting $\Ricci<0$ metrics is expected to take the following form.

\begin{conjecture}\label{Ys}\cite{NklNkn}
For each nilpotent Lie algebra $\ngo$, there is an open and convex cone $\cca\subset\tg(\ngo)$ such that a solvable Lie algebra $\sg$ with nilradical $\ngo$ admits a $\Ricci<0$ metric if and only if there exists $Y\in\sg$ such that $\ad{Y}|_{\ngo}^\RR\in\cca$ (up to automorphism conjugation).  
\end{conjecture}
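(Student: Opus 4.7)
The plan is to construct the candidate cone $\cca=\cca(\ngo)$ as the image of a moment map, verify its openness and convexity, and then prove the two implications separately; the necessity direction will be the main obstacle.

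\textbf{Construction of $\cca$, openness and convexity.} Fix a background inner product $\ip$ on the underlying vector space of $\ngo$ and let $V=\Lambda^2\ngo^*\otimes\ngo$, endowed with the induced inner product. The $\Gl(\ngo)$-action on $V$ carries a moment map $\mm:V\smallsetminus\{0\}\to\sym(\ngo)$ characterized by
$$
\tr(\mm(\mu)\,E)=\tfrac{1}{\|\mu\|^2}\la E\cdot\mu,\mu\ra,\qquad E\in\sym(\ngo),
$$
and for the Lie bracket $\mu$ of $\ngo$ it is known that $\mm(\mu)$ is a positive multiple of the Ricci operator of $(N,\ip)$. The cone $\cca$ is defined as the image of $\mm$ over the $\Gl(\ngo)$-orbit of $\mu$, after projecting to diagonalizable parts, restricting to $\tg(\ngo)$ modulo the Weyl group of $\ngo$, and selecting the trace-positive half-space. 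Openness follows from the transitivity of $\Gl(\ngo)$ on inner products together with the fact that the derivative of this composed map generically surjects onto $\tg(\ngo)$; convexity is the real reductive analogue of the Atiyah--Guillemin--Sternberg convexity theorem, applicable here thanks to the orbit-closure properties ensured by the nilpotency of $\ngo$.

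\textbf{Sufficiency.} Given a solvable $\sg$ with nilradical $\ngo$ and $Y\in\sg$ such that $D=\ad{Y}|_\ngo^\RR$ is $\Aut(\ngo)$-conjugate to an element of $\cca$, the defining property of $\cca$ yields an inner product $\ip'$ on $\ngo$ whose moment map output $\mm(\mu')$ has diagonalizable part a positive multiple of $D$. Extending $\ip'$ orthogonally to $\sg$ and then rescaling $Y\mapsto tY$ for large $t$, the standard block decomposition of $\Ricci_\sg$ becomes negative definite: the $\ngo$-block is handled by the moment-map choice, the $\RR Y$-block by the $-\sym(\ad{Y}|_\ngo)^2$ contribution, and the mixed block is $O(1/t)$. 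The positive-derivation case $\ad{Y}|_\ngo>0$ of \cite{NklNkn} sits in the interior of $\cca$ and furnishes the prototype for this perturbation.

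\textbf{Necessity and main obstacle.} Conversely, if $\sg$ admits a metric with $\Ricci<0$, one expresses $\Ricci_\sg$ in terms of $\mm$ applied to the induced $\ngo$-bracket and the symmetric and skew pieces of $\ad{Y}|_\ngo$, and averages over the maximal compact subgroup of the stabilizer of $\tg(\ngo)$ in $\Aut(\ngo)$ to extract a linear inequality placing $\ad{Y}|_\ngo^\RR$ in the closure of the moment map image. The genuinely hard step, and the main obstacle, is excluding \emph{boundary} derivations: for $D$ on the relative boundary of $\cca$ with marginal zero eigenvalues, one must rule out clever metric deformations that could still yield $\Ricci<0$. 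This step goes decisively beyond both the trace and the center obstructions of \cite{NklNkn} and, I expect, will require a Bochner-type integration-by-parts argument on $\ngo$ coupled with a fine analysis of the face structure of $\cca$ along its zero-eigenvalue walls.
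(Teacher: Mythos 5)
This statement is a \emph{conjecture} (due to Nikolayevsky--Nikonorov), not a theorem: the paper offers no proof of it, and explicitly records it as open, together with the companion open problems it depends on (the rank-one reduction, Conjecture \ref{rank-one}; the equivalence of $D$ and $D^\RR$, Conjecture \ref{Diff}; and the identification $\cca(\ngo)=\tg(\ngo)_{srn}$, Conjecture \ref{Ctsrn}). Your proposal does not close these gaps. Most visibly, you concede the point yourself in the necessity step: ``I expect, will require a Bochner-type integration-by-parts argument'' is a statement of hope, not an argument. The paper emphasizes that the \emph{only} known necessary condition is the trace/center obstruction of \cite{NklNkn}; your ``averaging over the maximal compact subgroup to extract a linear inequality'' is not carried out and there is no known mechanism that forces $\ad{Y}|_\ngo^\RR$ into a fixed cone for an arbitrary $\Ricci<0$ metric, let alone one that handles the skew and nilpotent parts of $\ad{Y}|_\ngo$ or reduces a general solvable $\sg$ to a rank-one extension.

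Your construction and convexity claim for $\cca$ are also wrong as stated. The real convexity theorem you invoke (Theorem \ref{conv1}, from \cite{HnzSch}) gives convexity of $\mm(X)\cap\ag_+$ for a \emph{single Weyl chamber} $\ag_+$ of the acting group; the full diagonal slice $\mm(X)\cap\ag$ is in general only a finite union of polytopes. Example \ref{tricky} in the paper is an explicit $5$-dimensional nilpotent Lie algebra where $\mm\bigl(\overline{G_{\tg(\ngo)}\cdot\lb}\bigr)\cap\Diag(\ngo)$ is a non-convex union of two triangles, so your cone, built from the image over a $\Gl(\ngo)$-orbit restricted to $\tg(\ngo)$, need not be convex. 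The paper's actual cone \eqref{cone-def} uses the smaller group $G_{\tg(\ngo)}$, adds the open cone $\Diag(\ngo)_{>0}$ (this is what produces openness, not a generic-surjectivity argument), and appeals to the Weyl-chamber version \eqref{cone-def2} for convexity. Even then, the moment-map characterization (Theorem \ref{main}) only describes \emph{strongly} Ricci negative diagonal derivations, and the relevant cone there depends on $D$ through $G_D$; collapsing this to a single $D$-independent cone is exactly what is known only for generic $D$ (inclusion \eqref{cone-cont}) and conjectural otherwise. So neither direction of the conjecture is established by your proposal.
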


The following rank-one reduction was proved to hold in the Einstein case in \cite{Hbr}.  

\begin{conjecture}\label{rank-one}\cite{NklNkn}
A solvable Lie algebra $\sg$ with nilradical $\ngo$ admits a $\Ricci<0$ metric if and only if there exists $Y\in\sg$ such that the Lie subalgebra $\RR Y\oplus\ngo$ admits a $\Ricci<0$ metric.    
\end{conjecture}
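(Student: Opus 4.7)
The plan is to address the two implications separately. For the reverse direction $(\Leftarrow)$, assume that $\RR Y\oplus\ngo$ admits a $\Ricci<0$ inner product $g_0$, and decompose $\sg=\ag\oplus\ngo$ orthogonally with $\ag$ abelian (a standard reduction in the $\Ricci<0$ setting), writing $\ag=\RR Y\oplus\ag'$. Consider the one-parameter family of inner products $g_t$ on $\sg$ that coincides with $g_0$ on $\RR Y\oplus\ngo$, keeps $\ag'$ orthogonal to it, and equals $t^2$ times a fixed reference inner product on $\ag'$. A $g_t$-orthonormal basis of $\ag'$ then has derivations on $\ngo$ of order $1/t$, so the $\ngo\times\ngo$, $\ngo\times\RR Y$, and $\RR Y\times\RR Y$ blocks of the Ricci tensor of $g_t$ converge to the corresponding blocks of the Ricci tensor of $g_0$ as $t\to\infty$, and hence remain strictly negative definite for $t$ large. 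The critical block $\Ricci|_{\ag'\times\ag'}$ scales as $1/t^2$, and the task becomes the choice of the reference inner product on $\ag'$ so that its leading coefficient is strictly negative; this reduces to arranging that the symmetric parts of the derivations $\ad(\ag')|_\ngo$ are non-degenerate with the right sign, a condition that one expects to impose using the cone $\cca(\ngo)$ from Section~\ref{cone-sec}.

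For the forward direction $(\Rightarrow)$, start with a $\Ricci<0$ metric $g$ on $\sg$. The natural candidate for $Y$ is an element whose restricted derivation $\ad Y|_\ngo$ lies in the cone $\cca(\ngo)$ conjectured to parametrize all rank-one $\Ricci<0$ extensions of $\ngo$; such a $Y$ is expected to exist by combining the necessary condition of \cite{NklNkn} ($\tr\ad Y>0$ and $\ad Y|_{\zg(\ngo)}>0$) with the cone construction. From this datum one would build a $\Ricci<0$ inner product on $\RR Y\oplus\ngo$ directly, rather than by naive restriction of $g$: the intrinsic Ricci of the rank-one Lie subalgebra is computed from its own Lie bracket and inner product and generally differs from the ambient Ricci restricted to $(\RR Y\oplus\ngo)\times(\RR Y\oplus\ngo)$, so a separate construction adapted to $Y$ is needed, most plausibly a path in the space of metrics on $\RR Y\oplus\ngo$ steered by the moment-map image of $\ad Y|_\ngo$.

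The main obstacle, in my view, is the forward implication: there is no known mechanism that produces from an arbitrary $\Ricci<0$ metric on $\sg$ a distinguished $Y$ whose rank-one extension carries $\Ricci<0$, and the a priori gap between ambient Ricci and intrinsic Ricci of the subgroup must be bridged. A natural strategy is to adapt Heber's variational argument from the Einstein case, where the rank-one reduction follows from the fact that the critical points of the Einstein functional are forced to have a rank-one structure; however, $\Ricci<0$ is an open condition rather than a critical-point equation, so the variational tools of \cite{Hbr} do not apply directly and would need to be replaced by a convex-geometric argument exploiting the openness and convexity of $\cca(\ngo)$. Achieving this, together with a proof that the two candidate cones (the conjectural one in Conjecture~\ref{Ys} and the moment map cone of Section~\ref{cone-sec}) coincide, appears to be the crux of any complete resolution.
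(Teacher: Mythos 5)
The statement you are trying to prove is not a theorem of the paper: it is Conjecture~\ref{rank-one}, attributed to \cite{NklNkn}, and the paper offers no proof of it in either direction. The only positive evidence cited is that the analogous rank-one reduction holds in the Einstein case by \cite{Hbr}, and the paper explicitly records the logical dependencies among Conjectures~\ref{Ys}, \ref{rank-one}, \ref{Diff} and \ref{Ctsrn} (e.g.\ that $\tg(\ngo)_{rn}$ must contain the cone $\cca$ of Conjecture~\ref{Ys} \emph{provided} Conjecture~\ref{rank-one} holds). So there is no ``paper's own proof'' to compare against, and any complete argument you produce would be a new result, not a reconstruction.

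Your proposal, as you yourself concede in the last paragraph, is a strategy outline rather than a proof, and both halves have genuine gaps. In the reverse direction, the scaling $g_t$ degenerates $\sg$ to $(\RR Y\oplus\ngo)\oplus\RR^{\dim\ag'}$ with an abelian factor, so the limit Ricci operator is only negative \emph{semi}definite; the sign of the $\ag'\times\ag'$ block at order $1/t^2$ is governed by the Gram matrix of the symmetric parts $S(\ad{Y'}|_\ngo)$, $Y'\in\ag'$, which can be degenerate (e.g.\ if some $\ad{Y'}|_\ngo$ is skew-symmetric in every compatible metric), and the off-diagonal blocks of order $1/t$ contribute a positive semidefinite correction to the Schur complement that must also be dominated. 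None of this is ``arranged'' by a choice of reference metric on $\ag'$ without further hypotheses, and invoking the cone $\cca(\ngo)$ here is circular, since $\cca(\ngo)$ is only proved in the paper to consist of \emph{strongly} Ricci negative derivations of the rank-one extensions (see \eqref{cone-cont}), not to control higher-rank extensions. In the forward direction you correctly identify the crux --- there is no known mechanism extracting a distinguished $Y$ from an arbitrary $\Ricci<0$ metric on $\sg$, and the only available necessary condition ($\tr\ad{Y}>0$ and $\ad{Y}|_{\zg(\ngo)}>0$) is far weaker than membership of $\ad{Y}|_{\ngo}^{\RR}$ in $\cca(\ngo)$ --- but identifying an obstacle is not overcoming it. As written, the proposal establishes neither implication.
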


On the other hand, the structural conditions on a solvable Lie algebra imposed by the negativity of sectional or Ricci curvature described above motivates the following natural question.  

\begin{question}\label{which}
Which nilpotent Lie algebras can be the nilradical of a solvable Lie algebra admitting $\Ricci<0$ metrics?
\end{question}

Such a Lie algebra will be called a {\it Ricci negative nilradical} (RN-nilradical for short).  Since the existence of a positive derivation is sufficient, even for the existence of a $\Sec<0$ solvable extension by Theorem \ref{sec}, any nilpotent Lie algebra which is $2$-step or has dimension $\leq 6$ is a RN-nilradical.  It is also known that any $7$-dimensional nilpotent Lie algebra (admitting a non-nilpotent derivation, or equivalently, a non-nilpotent solvable extension) is a RN-nilradical (see \cite{RN}).  However, the following examples suggest that, as in the case of Einstein nilradicals (see Remark \ref{E-rem} below), a structural characterization of RN-nilradicals may be hopeless. 

\begin{enumerate}[{\small $\bullet$}]
\item \cite{RN}  Explicit examples of nilpotent Lie algebras with the following properties (cf.\ the necessary condition above):  
\begin{enumerate}[(i)]
\item There are no diagonalizable derivations but not every derivation is nilpotent.  

\item There is a diagonalizable derivation but any derivation is traceless.  
\end{enumerate}
\item[ ]
\item \cite{RN}  Explicit examples of nilpotent Lie algebras which are not RN-nilradicals.  They all have a derivation of positive trace and one example for each of the following features is provided (for the first three examples any diagonalizable derivation has a zero eigenvalue on the center):
\begin{enumerate}[(i)]
  \item $\dim{\ngo}=8$ ($5$-step).

  \item $\ngo$ is $3$-step nilpotent ($\dim{\ngo}=10$).

  \item A continuous family of pairwise non-isomorphic $6$-step nilpotent Lie algebras of dimension $13$.

  \item $\ngo$ has a non-singular derivation but any diagonalizable derivation has a negative eigenvalue on the center ($\dim{\ngo}=13,17$, $5$-step).
\end{enumerate}
\item[ ]
\item \cite{RN}  A $10$-dimensional $5$-step example of a RN-nilradical such that any diagonalizable derivation has a negative eigenvalue.    
\end{enumerate}

\begin{remark}\label{E-rem}
If in Question \ref{which} we replace $\Ricci<0$ by {\it Einstein} (i.e.\ $\Ricci_g=cg$), then what it is analogously obtained are the so called {\it Einstein nilradicals} (see the survey \cite{cruzchica} for further information).  To admit a derivation with natural numbers as eigenvalues (so defining an $\NN$-gradation) is the only known general obstruction for Einstein nilradicals and no sufficient structural condition is available.  Several classification results within special classes of algebras are known in the literature though.  Remarkably, Einstein nilradicals are precisely the nilpotent Lie algebras admitting a Ricci soliton metric, which is known to be unique up to isometry and scaling.      
\end{remark}

\begin{remark}
There are also in the literature fine results on the existence of negatively curved $G$-invariant metrics on a homogeneous space $G/K$.  For instance, the homogeneous space $\SO^+(n,2)/\SO(n)$ ($n\geq 2$), which is homeomorphic to $S^1\times\RR^k$, does admit a $\Ricci<0$ invariant metric (see \cite[Example 1]{Nkn}).  On the other hand, it is proved in \cite{Brr} that any invariant metric on a given $G/K$ has $\scalar<0$ if and only if the universal cover of $G/K$ is the Euclidean space.  
\end{remark}

\section{Degeneration Principle}\label{deg-sec}

It is evident that all the geometric information on a Lie group endowed with a left-invariant metric, say $(G,g)$, is encoded just in the inner product $\ip:=g_e$ and the Lie bracket of the Lie algebra $\ggo$.  This suggests the following point of view, called the {\it moving-bracket approach}, which proposes to vary Lie brackets rather than inner products.  

We fix a real vector space $\ggo$ endowed with a fixed inner product $\ip$.  Let $\lca\subset\Lambda^2\ggo^*\otimes\ggo$ denote the algebraic subset of all Lie brackets on $\ggo$.  Each $\mu\in\lca$ is identified with the (left-invariant) metric determined by $\ip$ on the simply connected Lie group $G_\mu$ with Lie algebra $(\ggo,\mu)$:
$$
\mu \longleftrightarrow (G_\mu,\ip).
$$
In this way, the isomorphism class $\Gl(\ggo)\cdot\mu$ is identified with the set of all metrics on $G_\mu$ as follows:
$$
(G_{h\cdot\mu},\ip) \longleftrightarrow (G_\mu,\la h\cdot,h\cdot\ra), \qquad\forall h\in\Gl(\ggo),
$$
where $h\cdot\mu:=h\mu(h^{-1}\cdot,h^{-1}\cdot)$.  Note that $h^{-1}$ is an isometric isomorphism that determines an isometry between these two Riemannian manifolds.  Thus any two Lie brackets in the same $\Or(\ggo,\ip)$-orbit are isometric as Riemannian metrics; the converse assertion is known to hold (see \cite{Alk}) for {\it completely solvable} Lie brackets (i.e.\ $\Spec(\ad_\mu{X})\subset\RR$ for any $X\in\ggo$).  

A key observation is that any kind of geometric quantity associated to the Riemannian manifold $(G_\mu,\ip)$ depends continuously on the Lie bracket $\mu\in\lca$, which can be used to study pinching curvature properties as follows.

\begin{definition}\label{degen-def}
Given $\mu,\lambda\in\lca$, we say that $\mu$ {\it degenerates to} $\lambda$, denoted by $\mu\rightarrow\lambda$, if $\lambda\in\overline{\Gl(\ggo)\cdot\mu}$, where the closure relative to the usual vector space topology of $\Lambda^2\ggo^*\otimes\ggo$ is considered.
\end{definition}

Recall the geometric role of the orbit $\Gl(\ggo)\cdot\mu$ as the set of all metrics on $G_\mu$.

\begin{proposition}\label{deg-pin} (Degeneration Principle, see Figure \ref{degP})
If $\mu\rightarrow\lambda$ and $G_\lambda$ admits a metric satisfying a strict pinching curvature condition (e.g.\ $\Ricci<0$), then there is also a metric on $G_\mu$ for which the same pinching curvature condition holds.
\end{proposition}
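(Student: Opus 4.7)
The plan is to exploit two ingredients already set up in the paper: the identification of the orbit $\Gl(\ggo)\cdot\mu$ with the space of all left-invariant metrics on $G_\mu$, and the fact that the full curvature tensor of $(G_\nu,\ip)$ is given by polynomial/rational expressions in the structure constants of $\nu$ (with $\ip$ fixed), hence depends continuously on $\nu\in\lca$.

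First I would translate the hypothesis into bracket language. Since $G_\lambda$ admits a metric with, say, $\Ricci<0$, and every such metric corresponds to $(G_\lambda,\la h\cdot,h\cdot\ra)$ for some $h\in\Gl(\ggo)$, the identification yields an $h\in\Gl(\ggo)$ such that the bracket $h\cdot\lambda$, paired with the fixed inner product $\ip$, already satisfies the pinching condition. Because $\overline{\Gl(\ggo)\cdot\mu}$ is $\Gl(\ggo)$-invariant (the orbit itself is invariant, and closure commutes with the action of the homeomorphism $h$ on $\Lambda^2\ggo^*\otimes\ggo$), the hypothesis $\lambda\in\overline{\Gl(\ggo)\cdot\mu}$ upgrades to $h\cdot\lambda\in\overline{\Gl(\ggo)\cdot\mu}$. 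I then pick a sequence $g_n\in\Gl(\ggo)$ with $\mu_n:=g_n\cdot\mu\to h\cdot\lambda$.

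Next I would invoke continuity. The curvature of $(G_\nu,\ip)$ varies continuously in $\nu$, so $\Ricci(\mu_n)\to\Ricci(h\cdot\lambda)$ as symmetric bilinear forms on $\ggo$. A strict pinching condition, such as negative-definiteness of $\Ricci$, or any finite collection of strict inequalities between continuous functions of the curvature tensor, defines an open subset of the ambient tensor space, and its preimage under the continuous curvature map is open in $\lca$. Since $h\cdot\lambda$ lies in this open set, so does $\mu_n$ for all $n$ large enough. But each such $\mu_n$ sits in the orbit $\Gl(\ggo)\cdot\mu$, hence corresponds to a left-invariant metric on $G_\mu$, and this metric exhibits the desired pinching.

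There is no serious obstacle here: the statement is essentially a topological observation combining continuity of curvature in $\nu$ with the $\Gl(\ggo)$-invariance of the orbit closure. The only point that needs a bit of care is what exactly one means by a \emph{strict pinching curvature condition}; the argument goes through uniformly provided the condition is defined by strict inequalities between quantities that are continuous functions of the curvature tensor, which covers all the standard cases ($\Ricci<0$, $\Sec<0$, $\scalar<0$, and their pinched variants).
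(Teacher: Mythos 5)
Your proposal is correct and follows essentially the same route as the paper's proof: the paper reduces to the case where $(G_\lambda,\ip)$ itself satisfies the pinching condition (which you justify explicitly via the $\Gl(\ggo)$-invariance of the orbit closure), takes a sequence $h_k\cdot\mu\to\lambda$, and concludes by continuity of the curvature in the bracket together with the identification of orbit points with metrics on $G_\mu$. The only difference is that you spell out the ``without loss of generality'' step, which the paper leaves implicit.
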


\begin{proof}
Without any lost of generality, we can assume that $(G_\lambda,\ip)$ has $\Ricci<0$.  If $\mu\rightarrow\lambda$, then there is a sequence $h_k\in\Gl(\ggo)$ such that $h_k\cdot\mu\to\lambda$, as $k\to\infty$.  The pinching curvature condition will therefore hold for $(G_{h_k\cdot\mu},\ip)$ for sufficiently large $k$ by continuity, which implies that it holds for the corresponding isometric metrics $\la h_k\cdot,h_k\cdot\ra$ on $G_\mu$ for sufficiently large $k$, concluding the proof.
\end{proof}

\begin{figure}
  \begin{tikzpicture}[scale=0.8, domain=-4:3]
    \draw[very thin,color=gray] (-2.5,-1.5) grid (4.5,6.5);
    \draw[very thick, color=blue, fill=orange, opacity=0.8,
    rotate=-30]  plot (\x,{0.3*\x*\x})  ;
    \draw[very thick, color=blue, fill=blue] (0,0) circle (1.5pt) node[below] {$\lambda$};
    \draw (2.3,4) node[thick, color=orange] {$GL(\ggo)\cdot\mu$};
     \draw (3,0) node[below, color=blue] {$GL(\ggo)\cdot\lambda$};
    \draw[very thick, fill] (2,1) circle (1pt);
    \draw[very thick, fill] (1.5,0.25*1.5*1.5) circle (1pt) node[right] {$h_k\cdot\mu$};
    \draw[very thick, fill] (1,0.25*1) circle (1pt);
    \draw[very thick, fill] (0.6,0.25*0.6*0.6) circle (1pt);
    \draw[very thick, fill] (0.2,0.25*0.2*0.2) circle (1pt);
    \draw[fill, opacity=0.2] (0,0) circle [radius=0.8];
    \draw (0,-1) node  {$\Ricci<0$};
    \end{tikzpicture}
  \caption{Degeneration Principle}\label{degP}
\end{figure}
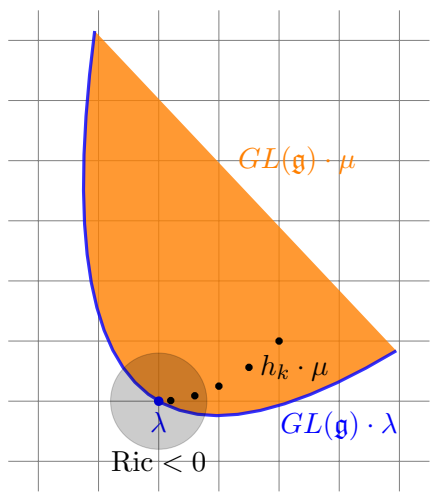

We now overview many applications, starting back in the 70s, of the moving-bracket approach and the above principle in the study of negatively curved Lie groups.  

\begin{enumerate}[{\small $\bullet$}]
\item Milnor proved the first part of Theorem \ref{scal} by showing that any non-abelian Lie algebra of dimension $n$ degenerates to exactly one of the following: 
$$
\mu_{heis}(e_1,e_2)=e_3; \qquad \mu_{hyp}(e_n,e_i)=e_i, \quad i=1,\dots,n-1,
$$
which are easily seen to admit $\scalar<0$ metrics.  These two are actually the only non-abelian Lie groups admitting a unique metric up to isometry and scaling (see \cite{inter}), and the metric on $\mu_{hyp}$ is isometric to the real hyperbolic space $\RR H^n$.  
\item[ ] 
\item To show that conditions (i) and (ii) of Theorem \ref{sec} are sufficient, Heintze considered the curve of Lie brackets $\mu_t$ on $\sg$ defined by $\ad_{\mu_t}{Y}|_\ngo=t\ad{Y}|_{\ngo}$, $\mu_t|_{\ngo\times\ngo}=\lb_\ngo$ and proved that $\Sec_{\mu_t}<0$ for sufficiently large $t$ (see \cite[Theorem 2]{Hnt}).   
\item[ ] 
\item In order to prove the above mentioned sufficient condition, Nikolayevsky-Nikonorov first showed that any solvable Lie algebra $\mu\in\lca$ degenerates to a $\lambda$ such that $\lambda(\ag,\ag)=0$, $\lambda(\ngo,\ngo)=0$, $\lambda(\ag,\ngo)\subset\ngo$ and $\ad_\lambda{X}$ is a semisimple operator with the same spectrum as $\ad_\mu{X}$ for any $X\in\ag$, where $\ggo=\ag\oplus\ngo$ is any decomposition such that $\ngo$ is the nilradical of $\mu$.  Then they also applied the Degeneration Principle to prove the sufficient conditions in \cite[Theorems 3 and 4]{NklNkn} and Nikolayevsky did it in the proof of \cite[Theorem 1.3]{Nkl}.  
\item[ ] 
\item A main part of most constructions of a non-solvable Lie algebra $\ggo$ admitting $\Ricci<0$ metrics in \cite{Wll1, Wll2} is to show that $\ggo$ degenerates to a solvable Lie algebra $\sg=\ag\oplus\ngo$ such that $[\ag,\ag]=0$.  Interestingly, $\ngo$ is non-abelian in spite of the nilradical of $\ggo$ is abelian.   
\item[ ] 
\item In \cite[Theorem 3.2]{RN}, the Degeneration Principle is applied to obtain a characterization of Ricci negative one-dimensional solvable extensions of a given nilpotent Lie algebra in terms of the set of all Ricci operators at the points in the closure $X$ of an orbit of nilpotent Lie brackets.  Since $X$ is irreducible (relative to the real algebraic Zariski topology), this allowed the application of nice convexity properties of a certain moment map (see Theorem \ref{main} below).   
\end{enumerate}

\begin{remark}
The moving-bracket approach was considered by J. Heber in his seminal article \cite{Hbr} on Einstein solvmanifolds and it has been further developed in \cite{soliton,minimal, spacehm}.  We refer to \cite[Section 5]{BF} and \cite{BhmLfn2,BhmLfn3,sol-HS} for more applications.  In most of the applications, concepts and results from geometric invariant theory, including moment maps, closed orbits, stability, categorical quotients and Kirwan stratification, have been exploited in one way or another.
\end{remark}

\section{Ricci negative derivations}\label{RN-sec}

As explained in Section \ref{his-sec}, the diverse kinds of examples provided in \cite{RN} seems to indicate that Question \ref{which} is too ambitious.  In this section, we consider instead the following also very natural problem.  

\begin{question}\label{fixnil}
Given a nilpotent Lie algebra $\ngo$, which are the solvable Lie algebras with nilradical $\ngo$ admitting a $\Ricci<0$ metric?
\end{question}

With the aim of keeping it simple, we first focus on one-dimensional solvable extensions.  Let $\ngo$ be a nilpotent Lie algebra.  Each $D\in\Der(\ngo)$ defines a solvable Lie algebra
$$
\sg_D=\RR f\oplus\ngo,
$$
with Lie bracket defined as the semi-direct product such that $\ad{f}|_{\ngo}=D$.

\begin{definition}\label{rnd-def}
A derivation $D$ of a nilpotent Lie algebra $\ngo$ with $\tr{D}>0$ is said to be {\it Ricci negative} (or rn-derivation for short) if the solvable Lie algebra $\sg_D$ admits an inner product of negative Ricci curvature.  We denote by $\Der(\ngo)_{rn}$ the set of all Ricci negative derivations of $\ngo$.   
\end{definition}

It follows from Theorem \ref{sec} that any positive derivation $D$ (i.e. $\Spec\left(D^\RR\right)>0$) belongs to $\Der(\ngo)_{rn}$.  Note also that any $\ngo$ admitting a rn-derivation is a RN-nilradical; the validity of the converse assertion, which is strongly related to Conjecture \ref{rank-one}, is not known.  

\begin{question}\label{RNrn}
Does any RN-nilradical admit a rn-derivation? 
\end{question}

The following particular instance of Question \ref{fixnil} arises.   
 
\begin{question}\label{Drn}
What kind of set is $\Der(\ngo)_{rn}$? 
\end{question}

Note that $\Der(\ngo)_{rn}$ parametrizes the set of all one-dimensional solvable extensions of $\ngo$ admitting $\Ricci<0$ metrics.

\subsection{Ricci operator}\label{Ric-sec}
Before going through more questions and conjectures, it is worth pointing out at this point some of the technical difficulties involved.    

We fix an inner product  $\ip$ on $\sg=\RR f\oplus\ngo$ such that $|f|=1$ and $f\perp\ngo$, in order to consider the moving-bracket approach described in Section \ref{deg-sec}.  Let $\lb_\ngo$ denote the Lie bracket of $\ngo$.  The Lie bracket of $\sg_D$ is therefore determined by the pair $(D,\lb_\ngo)$, for any $D\in\Der(\ngo)$.  It is easy to see (using e.g. \cite{solvsolitons} or \cite[(11)]{alek}) that the Ricci operator of $(\sg_D,\ip)$ is given by
\begin{equation}\label{Ric-half}
\Ricci_{(D,\lb_\ngo)}=\left[\begin{array}{c|c}
-\tr{S(D)^2} & \ast \\\hline
& \\
\ast & \Ricci_{\lb_\ngo}+\unm[D, D^t]-\tr(D)S(D) \\ &
\end{array}\right],
\end{equation}
where $\Ricci_{\lb_\ngo}=\tfrac{|\lb_\ngo|^2}{4}\mm(\lb_\ngo)$ is the Ricci operator of $(\ngo,\ip|_{\ngo\times\ngo})$ (see \eqref{defmm-2} for the definition of the moment map $\mm$),  $S(D):=\unm(D+D^t)$ and
$$
\la\Ricci f,X\ra=-\tr{S(D)\ad_\ngo{X}}, \qquad\forall X\in\ngo.
$$
It is known that $\ast=0$ if $D$ is normal (see e.g.\ the proof of \cite[Proposition 4.3]{solvsolitons}).

If with respect to a fixed orthonormal basis $\{ f, e_1,\dots,e_n\}$ of $(\sg,\ip)$, where $e_i\in\ngo$ for all $i$, 
\begin{equation}\label{hbarra}
\overline{h}=\left[\begin{array}{c|ccc}
c^{-1} & &0& \\ \hline &\\ 
X&&h& \\&
\end{array}\right], \qquad c\in\RR^*, \quad X\in\ngo,  \quad h\in\Gl(\ngo), 
\end{equation}
then $\left(\sg_D,\la\overline{h}\cdot,\overline{h}\cdot\ra\right)$ is isometric to the pair 
\begin{equation}\label{cXh}
\left(ch(D-\ad_\ngo{h^{-1}X})h^{-1},h\cdot\lb_\ngo\right), 
\end{equation}
since $\overline{h}$ is an isometric isomorphism between the corresponding metric Lie algebras.   The fact that any metric on $\sg_D$ is of the form $\la\overline{h}\cdot,\overline{h}\cdot\ra$ for some $\overline{h}$ as above implies the following characterization of Ricci negative derivations of $\ngo$.  

\begin{lemma}\label{rn-cond}
$D\in\Der(\ngo)_{rn}$ if and only if there exist  $X\in\ngo$ and $h\in\Gl(\ngo)$ such that $\Ricci_{(D_{X,h},h\cdot\lb_\ngo)}<0$, where $D_{X,h}:=(h(A-\ad_\ngo{h^{-1}X})h^{-1}\in\Der(\ngo,h\cdot\lb_{\ngo})$.    
\end{lemma}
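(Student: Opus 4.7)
The plan is to exploit the parametrization of inner products on $\sg_D$ provided by \eqref{hbarra}--\eqref{cXh}. The first step I would take is to observe that every inner product on $\sg_D$ is of the form $\la\bar h\,\cdot,\bar h\,\cdot\ra$ for some $\bar h$ of the block shape \eqref{hbarra}: given $\la\cdot,\cdot\ra'$, choose $h\in\Gl(\ngo)$ taking $\{e_i\}$ to an orthonormal basis of $(\ngo,\la\cdot,\cdot\ra'|_\ngo)$, and decompose the unit vector of $\ngo^{\perp'}$ as $c^{-1}f+X$ with $c\in\RR^*$, $X\in\ngo$; then $\bar h(f):=c^{-1}f+X$, $\bar h|_\ngo:=h$ carries $\ip$ to $\la\cdot,\cdot\ra'$. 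Together with \eqref{cXh}, this identifies the Ricci operators of all metrics on $\sg_D$ with those of the brackets $(cD_{X,h},h\cdot\lb_\ngo)$ as $(c,X,h)$ ranges over $\RR^*\times\ngo\times\Gl(\ngo)$. The ``if'' direction then follows at once by taking $c=1$: any pair $(X,h)$ with $\Ricci_{(D_{X,h},h\cdot\lb_\ngo)}<0$ produces an actual Ricci negative metric on $\sg_D$ via \eqref{hbarra}--\eqref{cXh}, so that $D\in\Der(\ngo)_{rn}$.

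For the converse, the hypothesis $D\in\Der(\ngo)_{rn}$ yields a metric on $\sg_D$ whose image in the moving-bracket picture is some $(cD_{X,h},h\cdot\lb_\ngo)$ with $\Ricci<0$, and the key step I would then carry out is to absorb the scalar $c$ into the remaining parameters. Setting $X':=cX$ and $h':=ch$, a short computation shows $D_{X',h'}=D_{X,h}$ (conjugation by the central scalar $c$ is trivial) and $h'\cdot\lb_\ngo=c^{-1}(h\cdot\lb_\ngo)$. Hence $(cD_{X,h},h\cdot\lb_\ngo)=c\cdot(D_{X',h'},h'\cdot\lb_\ngo)$ as Lie brackets on $\sg$, and the map $v\mapsto cv$ is a Lie algebra isomorphism realizing $(\sg,(cD_{X,h},h\cdot\lb_\ngo),\ip)$ as isometric to $(\sg,(D_{X',h'},h'\cdot\lb_\ngo),c^{-2}\ip)$. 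Since a homothetic rescaling of the inner product only multiplies the Ricci $(1,1)$-operator by a positive scalar, negative definiteness is preserved, yielding the desired $\Ricci_{(D_{X',h'},h'\cdot\lb_\ngo)}<0$.

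The only genuine subtlety I anticipate is the bookkeeping around the scalar $c$ in \eqref{hbarra}--\eqref{cXh}, which is suppressed in the statement of the lemma: it must be reabsorbed into $X$ and $h$ in a way that preserves both the derivation identity $D_{X',h'}\in\Der(\ngo,h'\cdot\lb_\ngo)$ and the sign of the Ricci operator. Once this rescaling step is verified, the equivalence follows directly from the identification of metrics on $\sg_D$ with block-decomposed isometries provided by \eqref{hbarra}--\eqref{cXh}.
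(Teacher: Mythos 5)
Your proposal is correct and follows the same route the paper takes: the paper offers no separate proof of this lemma beyond the observation preceding it that every inner product on $\sg_D$ is of the form $\la\overline{h}\cdot,\overline{h}\cdot\ra$ with $\overline{h}$ as in \eqref{hbarra}, combined with the isometry \eqref{cXh}. The one point you make explicit that the paper leaves implicit is the absorption of the scalar $c$ via $X'=cX$, $h'=ch$ and the homothety $v\mapsto cv$, and your computation there (including the sign-preservation of the Ricci operator under rescaling of the metric) is right; only the direction of your Gram--Schmidt choice of $h$ is stated backwards ($h^{-1}$, not $h$, should carry $\{e_i\}$ to a $\la\cdot,\cdot\ra'$-orthonormal basis), which is immaterial.
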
 

The terms $[D_{X,h},(D_{X,h})^t]$ and $S(D_{X,h})$ in formula \eqref{Ric-half} for $\Ricci_{(D_{X,h},h\cdot\lb_\ngo)}$ are extremely difficult to control, even in the case when $D$ is diagonalizable.

\subsection{Diagonalizable Ricci negative derivations} 
We now list some immediate though very useful consequences of Definition \ref{rnd-def}.  

\begin{enumerate}[{\small $\bullet$}]
\item $\Der(\ngo)_{rn}$ is open in $\Der(\ngo)$ and it is a {\it cone} (i.e.\ $\RR_{>0}$-invariant).  
\item[ ]
\item $\Der(\ngo)_{rn}$ is $\Aut(\ngo)$-invariant.  Moreover, if $D\in\Der(\ngo)_{rn}$, then 
$$
D':=c\left(hDh^{-1}-\ad_\ngo{X}\right)\in\Der(\ngo)_{rn},
$$ 
for any $c>0$, $X\in\ngo$ and $h\in\Aut(\ngo)$ (see \eqref{cXh}).  Indeed, it is easy to check that $\sg_{D'}$ is isomorphic to $\sg_D$.    
\item[ ]
\item $\Der(\ngo)_{rn}^\RR:=\{ D^\RR:D\in\Der(\ngo)_{rn}\}$ is an open cone in the $\Aut(\ngo)$-invariant subspace $\Der(\ngo)^\RR:=\{ D^\RR:D\in\Der(\ngo)\}$.  We do not known a priori if $D^\RR$ is also Ricci negative for any $D\in\Der(\ngo)_{rn}$.  
\item[ ]
\item $\Der(\ngo)_{rn}^\RR=\Aut(\ngo)\cdot\tg(\ngo)_{rn}$, where 
\begin{equation}\label{trn-def}
\tg(\ngo)_{rn}:=\Der(\ngo)_{rn}^\RR\cap\tg(\ngo),
\end{equation}
and $\tg(\ngo)$ is any maximal torus of diagonalizable derivations of $\ngo$ (cf.\ Conjecture \ref{Ys}).  Note that $\Der(\ngo)^\RR=\Aut(\ngo)\cdot\tg(\ngo)$ since all the maximal tori are $\Aut(\ngo)$-conjugate and any diagonalizable derivation belongs to some maximal torus.   
\item[ ]
\item $\tg(\ngo)_{rn}$ is an open cone in $\tg(\ngo)$. 
\item[ ]
\item The {\it Weyl group} 
\begin{equation}\label{Wg}
W(\ngo):=N_{\Aut(\ngo)}(\tg(\ngo))/C_{\Aut(\ngo)}(\tg(\ngo))
\end{equation}
acts on $\tg(\ngo)_{rn}$ by conjugation, where $N$ and $C$ denote normalizer and centralizer, respectively.  
\end{enumerate}

It is not too hard to see that if Conjecture \ref{Ys} turns out to be true, then the following must hold.  

\begin{conjecture}\label{Diff}
$D\in\Der(\ngo)_{rn}$ if and only if $D^\RR\in\Der(\ngo)_{rn}$.     
\end{conjecture}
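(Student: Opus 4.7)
The plan is to split the biconditional into its two implications and treat them separately, using the moving-bracket formalism of Section \ref{deg-sec}, the $\Aut(\ngo)$-invariance of $\Der(\ngo)_{rn}$ noted at the start of Section \ref{RN-sec}, and the Degeneration Principle (Proposition \ref{deg-pin}) as the main tools. For the implication $D^\RR \in \Der(\ngo)_{rn} \Rightarrow D \in \Der(\ngo)_{rn}$, I would construct a suitable degeneration. Choose a real Jordan basis of $\ngo$ adapted to the commuting triple $(D^\RR, D^{\im\RR}, D^n)$, let $\phi_t \in \Gl(\ngo)$ scale the $i$-th vector of each $D^n$-Jordan chain by $t^{-i}$, and set $h_t = \operatorname{id}_{\RR f} \oplus \phi_t \in \Gl(\sg)$. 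A short calculation using commutativity of the three parts gives $\phi_t D \phi_t^{-1} = D^\RR + D^{\im\RR} + t^{-1} D^n$, tending to $D^\RR + D^{\im\RR}$ as $t \to \infty$, and an analogous rescaling exploiting the $2\times 2$ real-block structure of $D^{\im\RR}$ eliminates the imaginary part. Since $\phi_t$ commutes with $D^\RR$ (both preserve and act as scalars on each real generalized eigenspace), both $h_t \cdot \mu_D$ and $h_t \cdot \mu_{D^\RR}$ converge to a common limit $\mu_\infty = (D^\RR, \lb_0)$, where $\lb_0$ is a contraction of $\lb_\ngo$ for which $D^\RR$ remains a derivation. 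The hypothesis together with the Degeneration Principle applied to $\mu_{D^\RR} \to \mu_\infty$ yields $\Ricci<0$ for $\mu_\infty$, and then the Degeneration Principle applied to $\mu_D \to \mu_\infty$ finishes this direction.

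The converse implication $D \in \Der(\ngo)_{rn} \Rightarrow D^\RR \in \Der(\ngo)_{rn}$ is the main obstacle. The Degeneration Principle is unavailable in the same form because $\mu_{D^\RR}$ is the simpler (more contracted) object and typically lies in the closure of the orbit of $\mu_D$, not the reverse. My plan is to work with Lemma \ref{rn-cond}: starting from $X \in \ngo$ and $h \in \Gl(\ngo)$ such that $\Ricci_{(D_{X,h}, h \cdot \lb_\ngo)} < 0$, attempt a limiting procedure sending the non-semisimple components of $D_{X,h}$ to zero while retaining strict negativity of the Ricci operator. The technical heart of the difficulty is that $D_{X,h} = h(D - \ad_\ngo h^{-1} X) h^{-1}$ is obtained from $D$ by subtracting a nilpotent that need not commute with $D$, so $(D_{X,h})^\RR$ is not simply related to $D^\RR$, and the non-normal terms $[D_{X,h}, (D_{X,h})^t]$ and $S(D_{X,h})^2$ entering the Ricci formula \eqref{Ric-half} are delicate to control under such limits. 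A promising alternative is to derive Conjecture \ref{Diff} as a corollary of Conjecture \ref{Ys}: the latter would identify $\tg(\ngo)_{rn}$ with the $\Aut(\ngo)$-saturation of the cone $\cca$, which depends only on real semisimple parts by construction, so the equivalence $D \leftrightarrow D^\RR$ follows immediately.

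The step I expect to be the main obstacle is precisely this converse direction, because no natural operation is known that produces a $\Ricci<0$ metric on $\sg_{D^\RR}$ from one on $\sg_D$. A complete proof will most likely combine the convexity and openness of $\tg(\ngo)_{rn}$ (see \eqref{trn-def}) with the moment-map machinery and Theorem \ref{main} of Section \ref{cone-sec}, and may only be accessible as a consequence of a sufficiently general version of Conjecture \ref{Ys}.
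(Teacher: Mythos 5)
This statement is posed as an open conjecture in the paper; no proof is given there. The authors only remark that it would follow from Conjecture \ref{Ys}, which is itself unproven, so your fallback strategy for the hard direction coincides with the paper's own observation but does not constitute a proof. You are right that the direction $D\in\Der(\ngo)_{rn}\Rightarrow D^\RR\in\Der(\ngo)_{rn}$ is the genuine obstacle, and your diagnosis of why Lemma \ref{rn-cond} is hard to exploit (the non-normal terms in \eqref{Ric-half} and the fact that $(D_{X,h})^\RR$ is not simply related to $D^\RR$) is accurate.

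However, the direction you do claim to prove contains a genuine logical error. Your construction gives $h_t\cdot\mu_{D^\RR}\to\mu_\infty$, i.e.\ $\mu_{D^\RR}\rightarrow\mu_\infty$ in the sense of Definition \ref{degen-def}, and you then invoke the Degeneration Principle to transfer $\Ricci<0$ \emph{from} $\mu_{D^\RR}$ \emph{to} the limit $\mu_\infty$. Proposition \ref{deg-pin} goes the other way: if $\mu\rightarrow\lambda$ and the \emph{limit} $\lambda$ admits $\Ricci<0$, then so does $\mu$. An open curvature condition is never inherited by points of the orbit closure (every Lie bracket degenerates to the flat abelian one), so this step fails unless $\mu_\infty$ lies in the actual orbit of $\mu_{D^\RR}$, which it does not when $\lb_0$ is a proper contraction of $\lb_\ngo$. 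A second unaddressed issue is convergence of $\phi_t\cdot\lb_\ngo$ itself: scaling the $i$-th vector of a Jordan chain of $D^n$ by $t^{-i}$ multiplies the structure constant $c_{ij}^k$ by $t^{a_k-a_i-a_j}$, and nothing forces these exponents to be nonpositive, so the limit $\lb_0$ need not exist. What a correct argument for this direction would require is a degeneration $\mu_D\rightarrow\mu_{D^\RR}$ (or $\rightarrow\mu_{D^\RR+D^{\im\RR}}$) with the bracket on $\ngo$ \emph{preserved} in the limit, followed by a single application of Proposition \ref{deg-pin} in the correct direction; constructing such $\phi_t$ compatible with $\lb_\ngo$ is exactly the open difficulty, since $\exp(sD^n)\in\Aut(\ngo)$ commutes with $D$ and therefore cannot remove the nilpotent part.
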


This would imply that $\tg(\ngo)_{rn}=\Der(\ngo)_{rn}\cap\tg(\ngo)$ and so $\tg(\ngo)_{rn}$ stands out from the sets to be understood in order to give a satisfactory answer to Question \ref{Drn}.  The relationship with the image of certain moment map and its convexity properties described below lead us to propose the following conjecture.   

\begin{conjecture}\label{trn}
The open cone $\tg(\ngo)_{rn}$ is convex.  
\end{conjecture}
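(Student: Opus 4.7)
The plan is to reinterpret membership in $\tg(\ngo)_{rn}$ as a moment-map condition for a torus action on the variety of nilpotent Lie brackets, and then invoke convexity of the moment-map image. Fix an orthonormal basis of $\ngo$ in which $D \in \tg(\ngo)$ is diagonal; then $S(D) = D$ and $[D, D^t] = 0$, so formula \eqref{Ric-half} reduces to
\begin{equation*}
\Ricci_{(D, \mu)} \big|_\ngo = \Ricci_\mu - (\tr D)\,D, \qquad \Ricci_{(D, \mu)}(f,f) = -\tr(D^2) < 0,
\end{equation*}
for any Lie bracket $\mu$ on $\ngo$. By Lemma \ref{rn-cond} together with the Degeneration Principle (Proposition \ref{deg-pin}), the condition $D \in \tg(\ngo)_{rn}$ amounts to the existence of some $\mu$ in the closure $\overline{\Gl(\ngo) \cdot \lb_\ngo} \setminus \{0\}$ for which the symmetric operator $\Ricci_\mu - (\tr D)\,D$ is negative definite on $\ngo$.

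Next, using $\Ricci_\mu = \tfrac{|\mu|^2}{4}\mm(\mu)$ and projecting with respect to the trace form onto $\tg(\ngo)$, consider the torus moment map
\begin{equation*}
\Phi \colon \overline{\Gl(\ngo) \cdot \lb_\ngo} \setminus \{0\} \longrightarrow \tg(\ngo), \qquad \mu \longmapsto \proy_{\tg(\ngo)} \mm(\mu).
\end{equation*}
When restricted to the $\tg(\ngo)$-diagonal, the Ricci-negativity condition translates into a strict linear (open half-space) condition on $\Phi(\mu)$ of the form $\Phi(\mu) \prec \tfrac{4\tr D}{|\mu|^2}\,D$, understood as a pointwise inequality of diagonal operators. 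Modulo the off-diagonal issue discussed below, $\tg(\ngo)_{rn}$ is thus essentially described as the set of $D \in \tg(\ngo)$ such that the image of $\Phi$ meets a $D$-indexed open half-space of $\tg(\ngo)$.

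The key input is convexity of the image of $\Phi$, which follows from the real algebraic analogue of the Atiyah--Guillemin--Sternberg convexity theorem: $\overline{\Gl(\ngo) \cdot \lb_\ngo}$ is an irreducible real algebraic variety on which a real reductive group acts, so its projected moment image is convex. This is precisely the convexity statement underlying Theorem \ref{main} and the construction of $\cca(\ngo)$ in Section \ref{cone-sec}. Given $D_1, D_2 \in \tg(\ngo)_{rn}$ with witnesses $\mu_1, \mu_2$, convexity allows a convex combination of $\Phi(\mu_1)$ and $\Phi(\mu_2)$ to be realised as $\Phi(\mu)$ for some $\mu$; such a $\mu$ then certifies that $tD_1 + (1-t)D_2 \in \tg(\ngo)_{rn}$ at the corresponding $t$, and openness of $\tg(\ngo)_{rn}$ combined with a midpoint-iteration argument upgrades this to all $t \in (0,1)$.

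The main obstacle lies in the reduction carried out in the second paragraph: the genuine Ricci-negativity condition requires negative definiteness of $\Ricci_\mu - (\tr D)\,D$ on \emph{all} of $\ngo$, whereas $\Phi$ only records the $\tg(\ngo)$-diagonal part of $\Ricci_\mu$. Controlling the off-diagonal contributions is the heart of the matter, and the natural strategy is to use the extra freedom $h \in \Gl(\ngo)$, $X \in \ngo$ in Lemma \ref{rn-cond} to flow along the $T$-orbit of $\mu$ toward a minimum of $\|\mm(\mu)\|^2$ (a nilsoliton-type point), at which the Ricci operator is block-diagonal with respect to the $\tg(\ngo)$-weight decomposition so that the off-diagonal terms drop out. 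Making this reduction go through for an arbitrary $D \in \tg(\ngo)_{rn}$, rather than only for those $D$ already captured by the moment-image cone $\cca(\ngo)$ of Theorem \ref{main}, is precisely what distinguishes Conjecture \ref{trn} from what is currently established in Section \ref{cone-sec} and is where the substantive additional work would have to be done.
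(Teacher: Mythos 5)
The statement you are trying to prove is Conjecture \ref{trn}: the paper does not prove it, it only motivates it, and your own closing paragraph concedes that the ``substantive additional work'' remains to be done. So what you have is a strategy outline with an acknowledged hole, not a proof. The hole is real, and it is located exactly where you place it, but it is worth being precise about why the moment-map route does not close it. Your reduction in the second paragraph misuses Lemma \ref{rn-cond}: the metrics on $\sg_D$ correspond to pairs $\left(D_{X,h},h\cdot\lb_\ngo\right)$ in which the derivation is transformed to $h(D-\ad_\ngo{h^{-1}X})h^{-1}$ \emph{together with} the bracket, so you cannot hold the diagonal $D$ fixed while letting $\mu$ range over all of $\overline{\Gl(\ngo)\cdot\lb_\ngo}$. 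Keeping the derivation equal to $D$ forces $h$ to centralize $D$, which is precisely why Theorem \ref{main} involves the orbit closure $\overline{G_D\cdot\lb}$ of the centralizer rather than the full $\Gl(\ngo)$-orbit, and why that theorem characterizes \emph{strongly} Ricci negative derivations ($D^t=D$, $f\perp\ngo$) and not membership in $\tg(\ngo)_{rn}$. Note also that $\tg(\ngo)_{rn}$ is defined as $\Der(\ngo)_{rn}^{\RR}\cap\tg(\ngo)$, i.e.\ via semisimple parts, so by the chain \eqref{conts} it sits at the opposite, largest end from the set your argument actually addresses; bridging that gap is Conjecture \ref{Diff}, which is open.

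Even granting the reduction to a moment-map condition, the convexity step fails as stated. Witnesses for $D_1$ and $D_2$ live in $\mm\left(\overline{G_{D_1}\cdot\lb}\right)$ and $\mm\left(\overline{G_{D_2}\cdot\lb}\right)$, images of orbit closures of \emph{different} groups, and Theorem \ref{conv1} only yields convexity of $\mm(X)\cap\ag_+$ for a single group and a single Weyl chamber; there is no theorem letting you interpolate between points of two different moment images. This is exactly why the paper builds $\cca(\ngo)$ from the smallest group $G_{\tg(\ngo)}$, obtaining a convex cone that is only shown to contain $\tg(\ngo)_{srn}\cap\tg(\ngo)_{gen}$, the generic part. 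Carried through, your argument reproves the convexity of $\cca(\ngo)$ (the content of the Proposition in Section \ref{cone-sec}); promoting that to convexity of $\tg(\ngo)_{srn}$ is Conjecture \ref{Ctsrn}, and promoting it further to $\tg(\ngo)_{rn}$ requires Conjecture \ref{Diff} as well. None of these implications is supplied by your proposal, so the conjecture remains open.
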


It is easy to see that $\tg(\ngo)_{rn}$ must contain the cone $\cca$ in Conjecture \ref{Ys}, provided Conjecture \ref{rank-one} holds true.

\subsection{Strongly Ricci negative derivations and the moment map}\label{srn-sec}
The following stronger condition on a derivation was studied in \cite{RN}.  

\begin{definition}\label{srnd-def}
A derivation $D$ of a nilpotent Lie algebra $\ngo$ with $\tr{D}>0$ is said to be {\it strongly Ricci negative} (srn-derivation for short) if the solvable Lie algebra $\sg_D$ admits an inner product of negative Ricci curvature such that $D^t=D$ and $f\perp\ngo$.  We denote by $\Der(\ngo)_{srn}$ the cone of all strongly Ricci negative derivations of $\ngo$.   
\end{definition}

Note that $\Der(\ngo)_{srn}\subset\Der(\ngo)^\RR$ and it is also $\Aut(\ngo)$-invariant.  Indeed, if $(\sg_D,\ip)$ has $\Ricci<0$, then the isometric pair $\left(\sg_{hDh^{-1}},\left\la\overline{h}^{-1}\cdot,\overline{h}^{-1}\cdot\right\ra\right)$ also has $\Ricci<0$, where $c=1$ and $X=0$ (see \eqref{hbarra}), and $hDh^{-1}$ is symmetric with respect to the metric $\left\la \overline{h}^{-1}\cdot,\overline{h}^{-1}\cdot\right\ra$.   

\begin{question}\label{Dsrn}
What kind of set is the cone $\Der(\ngo)_{srn}$? Is $\Der(\ngo)_{srn}$ open in $\Der(\ngo)^\RR$? 
\end{question}

The following result characterizes strongly Ricci negative derivations in terms of the moment map $\mm:\Lambda^2\ngo^*\otimes\ngo\rightarrow\sym(\ngo)$ for the $\Gl(\ngo)$-representation $\Lambda^2\ngo^*\otimes\ngo$, defined by 
\begin{equation}\label{defmm-2}
\la \mm(\mu),E\ra=\tfrac{1}{|\mu|^2}\left\langle E\cdot\mu,\mu\right\rangle, \qquad \mu\in
\Lambda^2\ngo^*\otimes\ngo, \quad E\in\sym(\ngo).  
\end{equation}
See Section \ref{mm-sec} for more details on the moment map.  We fix a basis $\{ e_i\}$ of $\ngo$ such that 
$$
\tg(\ngo)\subset\Diag(\ngo), 
$$ 
where $\Diag(\ngo)$ denotes the vector space of all operators of $\ngo$ whose matrix in terms of $\{ e_i\}$ is diagonal.  Note that the vector space $\ngo$ is identified with $\RR^n$ using the basis $\{ e_i\}$ and so the whole setting described in Section \ref{mm-sec} can be used.  In particular, the inner products appearing in \eqref{defmm-2} are the canonical ones, i.e.\  $\{ e_i\otimes e^j\}$ and $\{ (e^i\wedge e^j)\otimes e_k\}$ are respectively orthonormal bases.  

Let $G_D$ denote the connected component of the centralizer subgroup of $D$ in $\Gl(\ngo)$ and let $\ggo_D$ be its Lie algebra.  

\begin{theorem}\label{main}\cite[Corollary 3.4]{RN}
Let $\ngo$ be a nilpotent Lie algebra with Lie bracket $\lb$ and consider $D\in\tg(\ngo)$ such that $\tr{D}>0$.  Then the following conditions are equivalent:
\begin{itemize}
\item[(i)] $D$ is strongly Ricci negative.
\item[ ]
\item[(ii)] $D\in\RR_{>0}\, \mm\left(G_D\cdot\lb\right)\cap\Diag(\ngo) + \Diag(\ngo)_{>0}$.
\item[ ]
\item[(iii)] $D\in\RR_{>0}\, \mm\left(\overline{G_D\cdot\lb}\right)\cap\Diag(\ngo) + \Diag(\ngo)_{>0}$.
\item[ ]
\item[(iv)] $D\in\RR_{>0}\, \mm\left(\overline{G_D\cdot\lb}\right)\cap\ag_+^D + \Diag(\ngo)_{>0}$, where $\ag_+^D\subset\Diag(\ngo)$ is any Weyl chamber of $G_D$.
\end{itemize}
\end{theorem}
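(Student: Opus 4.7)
The plan is to reformulate strong Ricci negativity as an operator pinching inequality involving the moment map $\mm$, and then to use convexity of the moment polytope attached to the $G_D$-orbit of $\lb$ to recast this into the cone-theoretic conditions (ii)--(iv).

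\emph{Step 1 (Ricci formula with symmetric $D$).} I would specialize formula \eqref{Ric-half} to the case $D^t=D$. Then $S(D)=D$ is normal, the off-diagonal blocks $\ast$ vanish, and $[D,D^t]=0$; the Ricci operator of $(\sg_D,\ip)$ becomes block-diagonal, with $\RR f$-entry $-\tr(D^2)<0$ (automatic since $D\neq 0$) and $\ngo$-block $\tfrac{|\lb|^2}{4}\mm(\lb)-\tr(D)\,D$. Hence $\Ricci<0$ reduces to the symmetric-operator inequality $\tr(D)\,D>\tfrac{|\lb|^2}{4}\mm(\lb)$ on $\ngo$.

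\emph{Step 2 (all admissible metrics).} Parametrizing inner products on $\sg_D$ with $f\perp\ngo$ via the isomorphisms $\overline{h}$ in \eqref{hbarra} (with $X=0$), the requirement that $D$ be symmetric with respect to the transported inner product forces, through the polar decomposition $h=U|h|$, the positive factor $|h|$ to commute with $D$, i.e.\ $|h|\in G_D\cap\sym^+(\ngo)$. The orthogonal factor $U\in\Or(\ngo)$ acts by simultaneous conjugation on $hDh^{-1}=UDU^{-1}$ and on $\mm(h\cdot\lb)=U\mm(|h|\cdot\lb)U^{-1}$, and so preserves the pinching. Combined with Step 1 this yields: $D$ is srn if and only if there exist $P\in G_D\cap\sym^+(\ngo)$ and $r>0$ with $rD-\mm(P\cdot\lb)>0$ as symmetric operators on $\ngo$.

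\emph{Step 3 (convexity and passage to $\Diag(\ngo)$).} Since the diagonal entries of a positive-definite symmetric matrix are strictly positive, projecting the Step-2 inequality onto $\Diag(\ngo)$ yields $rD-\mm(P\cdot\lb)^{\Diag}\in\Diag(\ngo)_{>0}$, which already resembles (ii) except that $\mm(P\cdot\lb)^{\Diag}$ is only the diagonal part of a moment value. To promote this to membership in $\mm(G_D\cdot\lb)\cap\Diag(\ngo)$, I would invoke Kirwan's real convexity theorem applied to the $G_D$-action on $\Lambda^2\ngo^*\otimes\ngo$, whose moment map is the orthogonal projection of $\mm$ onto $\sym(\ngo)\cap\ggo_D$: since $\Diag(\ngo)$ is a maximal abelian subspace of $\sym(\ngo)\cap\ggo_D$, the intersection $\mm(\overline{G_D\cdot\lb})\cap\Diag(\ngo)$ is a convex $W(G_D)$-invariant set, each of whose elements is realized as $\mm(g\cdot\lb)$ for some $g$ in the closure. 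This yields (i)$\Leftrightarrow$(iii), and an approximation of closure points by orbit points, together with the openness of $\Diag(\ngo)_{>0}$, refines the statement to (ii).

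\emph{Step 4 (Weyl chamber form).} The Weyl group $W(G_D)$ permutes Weyl chambers in $\Diag(\ngo)$ while preserving $\mm(\overline{G_D\cdot\lb})\cap\Diag(\ngo)$; hence for any fixed chamber $\ag_+^D\subset\Diag(\ngo)$, the intersection $\mm(\overline{G_D\cdot\lb})\cap\ag_+^D$ is a convex polytope whose Weyl translates recover the full intersection. Any given $D\in\tg(\ngo)$ can be moved into $\ag_+^D$ by a Weyl element acting compatibly on the representing orbit point, which gives (iii)$\Leftrightarrow$(iv).

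The main obstacle is Step 3: converting the operator pinching produced by Step 2 into the cone condition (ii) requires the full force of Kirwan's real convexity theorem, together with a careful verification that the Cartan-type diagonalization of $\mm(P\cdot\lb)$ can be implemented by an element of $G_D\cap\Or(\ngo)$, so that the resulting moment value actually lies in the $G_D$-orbit closure of $\lb$ rather than merely projecting into $\Diag(\ngo)$.
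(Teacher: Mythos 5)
The paper does not actually prove Theorem \ref{main}: it is quoted from \cite[Corollary 3.4]{RN}, so your proposal can only be measured against that source and the supporting material in Sections \ref{Ric-sec} and \ref{mm-sec}. Your Steps 1, 2 and 4 are sound and follow the intended route: with $D^t=D$ formula \eqref{Ric-half} reduces $\Ricci<0$ to the operator inequality $\tr(D)D>\tfrac{|\lb|^2}{4}\mm(\lb)$; the polar-decomposition argument correctly shows that the symmetry constraint forces $|h|\in G_D$ (since, for diagonal $D$, symmetry of $|h|D|h|^{-1}$ is equivalent to $[|h|^2,D]=0$); and the chamber statement (iii)$\Leftrightarrow$(iv) is exactly Theorem \ref{conv1} applied to the irreducible set $\overline{G_D\cdot\lb}$.

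The gap is in Step 3, and it is precisely the nontrivial content of the corollary. From $rD-\mm(P\cdot\lb)>0$ you correctly deduce $rD-\Diagg\left(\mm(P\cdot\lb)\right)\in\Diag(\ngo)_{>0}$, but $\Diagg\left(\mm(P\cdot\lb)\right)$ is a value of the \emph{projected} (torus) moment map $\mm_\ag=\Diagg\circ\mm$, whereas (ii) and (iii) require a point of the \emph{intersection} $\mm\left(\overline{G_D\cdot\lb}\right)\cap\Diag(\ngo)$. These sets are genuinely different: Example \ref{tricky} and Figure \ref{tricky-fig} exhibit a five-dimensional $\ngo$ for which the intersection is a non-convex union of two triangles strictly contained in the rectangle $\CH_{\lb}$ filled out by the projection. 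In particular your assertion that $\mm\left(\overline{G_D\cdot\lb}\right)\cap\Diag(\ngo)$ is a convex $W(G_D)$-invariant set is false in general --- the paper states explicitly after Theorem \ref{conv1} that only the intersection with a single Weyl chamber is convex --- and Kirwan-type convexity does not by itself convert the diagonal part of a moment value into an honest diagonal moment value attained on the orbit closure. The same issue undermines your approximation argument for passing from (iii) to (ii): for nearby orbit points $g_k\cdot\lb$ the matrices $\mm(g_k\cdot\lb)$ are close to a diagonal matrix but need not be diagonal, so they do not witness membership in $\mm(G_D\cdot\lb)\cap\Diag(\ngo)$. The missing ingredient, supplied in \cite{RN} using the $K_D$-action together with the convexity results of \cite{HnzSch,BllGhgHnz}, is an argument that the strict inequality survives the replacement of $\Diagg\left(\mm(P\cdot\lb)\right)$ by an actual diagonal value of $\mm$ on $\overline{G_D\cdot\lb}$; as written, your Step 3 asserts this rather than proves it, as you yourself acknowledge.
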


It would be very helpful to know the answer to the following question.  

\begin{question}\label{trDp}
Does condition (iii) imply that $\tr{D}>0$? 
\end{question}

Note that all the cones in the above theorem depend on $D$ and are open in $\Diag(\ngo)$ since the space $\Diag(\ngo)_{>0}$ of positive diagonal matrices is so.  On the other hand, the cone in part (iv) is in addition convex by Theorem \ref{conv1}.

Since we do not know if $\Der(\ngo)_{srn}$ is open in $\Der(\ngo)^\RR$ (see Question \ref{Dsrn}), we do not know whether the corresponding cone 
\begin{equation}\label{tsrn-def}
\tg(\ngo)_{srn}:=\Der(\ngo)_{srn}\cap\tg(\ngo)
\end{equation}
is open in $\tg(\ngo)$ either.  Note that the Weyl group $W(\ngo)$ (see \eqref{Wg}) is also acting on $\tg(\ngo)_{srn}$ since $\Der(\ngo)_{srn}$ is $\Aut(\ngo)$-invariant.      

The cone $\tg(\ngo)_{srn}$ was proved to be open in $\tg(\ngo)$ and convex for Heisenberg and filiform Lie algebras in \cite{NklNkn,Nkl} (see \cite{Gtr}).  Furthermore, it is shown in \cite{Gtr} that the same holds for every $\ngo$ of dimension $\leq 5$.  All this and the results given in the next section suggest that the following may be true.   

\begin{conjecture}\label{tsrn}
The cone $\tg(\ngo)_{srn}$ is open in $\tg(\ngo)$ and convex.     
\end{conjecture}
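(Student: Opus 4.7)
For openness, I would realize $\tg(\ngo)_{srn}$ as the image under the projection $(D,\ip)\mapsto D$ of the set of pairs
\[
\{(D,\ip):D\in\tg(\ngo),\ \ip\ \text{is a metric on}\ \sg_D\ \text{with}\ D^t=D,\ f\perp\ngo,\ \Ricci_{(D,\lb_\ngo)}<0\}.
\]
Each fiber of this projection is nonempty, since every $D\in\tg(\ngo)$ is $\RR$-diagonalizable and therefore symmetric in some orthonormal basis, and the projection is an open map. Formula \eqref{Ric-half} shows that $\Ricci_{(D,\lb_\ngo)}$ depends continuously on $(D,\ip)$, so the above set is open and its projection $\tg(\ngo)_{srn}$ is open in $\tg(\ngo)$.

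For convexity, my plan is to compare $\tg(\ngo)_{srn}$ with a $D$-independent convex cone constructed via Theorem \ref{main}. That theorem puts each srn derivation $D$ in the open convex cone $\RR_{>0}\,\mm(\overline{G_D\cdot\lb})\cap\ag_+^D+\Diag(\ngo)_{>0}$, but the cone varies with $D$ through the centralizer $G_D$. To decouple, I would replace $G_D$ by the fixed diagonal torus $T\subset\Gl(\ngo)$ of operators diagonal in $\{e_i\}$, which satisfies $T\subseteq G_D$ for every $D\in\tg(\ngo)\subset\Diag(\ngo)$, so that $\mm(\overline{T\cdot\lb})\subseteq\mm(\overline{G_D\cdot\lb})$. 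Setting
\[
\cca(\ngo):=\RR_{>0}\,\mm(\overline{T\cdot\lb})\cap\tg(\ngo)+\bigl(\Diag(\ngo)_{>0}\cap\tg(\ngo)\bigr),
\]
a real-algebraic version of the Atiyah--Guillemin--Sternberg torus convexity theorem ensures that $\mm(\overline{T\cdot\lb})$ is convex; hence $\cca(\ngo)$ is an open convex cone in $\tg(\ngo)$, and by Theorem \ref{main}(iii) applied with $T$ in place of $G_D$ one obtains $\cca(\ngo)\subseteq\tg(\ngo)_{srn}$.

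The hard part is upgrading this to the equality $\tg(\ngo)_{srn}=\cca(\ngo)$, which would imply the conjecture. Concretely, given srn $D$ with a witness $\mu\in\overline{G_D\cdot\lb}$ satisfying $\mm(\mu)\in\RR_{>0}D-\Diag(\ngo)_{>0}$, one must produce a (possibly different) witness already in $\overline{T\cdot\lb}$. This is a Kirwan-type projection principle: after intersecting with the torus Lie algebra, the moment image of a reductive-group orbit should coincide with that of a maximal torus. The main obstacle is that $G_D$ is a non-compact real reductive group acting on a real algebraic variety, that the normalized moment map $\mm(\mu)=|\mu|^{-2}\la\cdot\mu,\mu\ra$ behaves differently from the classical unnormalized one, and that the cone $\Diag(\ngo)_{>0}$ interacts nontrivially with the projection to $\tg(\ngo)$. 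Overcoming it would likely require a careful analysis of the Kempf--Ness stratification of $\overline{G_D\cdot\lb}$ and its compatibility with the $T$-orbit closure, in the spirit of the Heinzner--Schwarz theory of moment maps for real reductive group actions.
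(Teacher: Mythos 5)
The statement you are addressing is Conjecture \ref{tsrn}: the paper does not prove it, and explicitly records that neither the openness (Question \ref{Dsrn}) nor the convexity of $\tg(\ngo)_{srn}$ is known in general --- both are only verified for Heisenberg and filiform algebras and for $\dim\ngo\leq 5$, via \cite{NklNkn,Nkl,Gtr}. So there is no proof in the paper to compare against, and your attempt must stand on its own; as written it does not.

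The openness step fails. The set of pairs you project is not open in $\tg(\ngo)\times\{\text{metrics}\}$, because it carries the closed constraints $D^t=D$ and $f\perp\ngo$; it is only open inside the incidence variety cut out by those constraints. The projection of that incidence variety onto $\tg(\ngo)$ is not an open map: the fiber over $D$ is the set of inner products making the eigenspaces of $D$ mutually orthogonal, and this fiber shrinks discontinuously when a non-generic $D$ (large eigenspaces, few orthogonality constraints) is perturbed to a generic $D'$ (all weight spaces $\ngo_i$ must become mutually orthogonal). A metric witnessing $\Ricci<0$ for $D$ need not lie near any metric in the smaller fiber over $D'$. This is precisely the difficulty that keeps Question \ref{Dsrn} open. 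The convexity step also fails: for the diagonal torus $T=\Diag(\ngo)_{>0}$, what torus convexity (Theorem \ref{conv1} and the remark following it) gives is that $\mm_\ag(\overline{T\cdot\lb})$ --- the orthogonal \emph{projection} of the moment image onto $\Diag(\ngo)$ --- is a convex polytope; but Theorem \ref{main} requires the actual moment map value to lie in $\Diag(\ngo)$, i.e.\ the \emph{intersection} $\mm(\overline{T\cdot\lb})\cap\Diag(\ngo)$, and that set is not convex in general: in Example \ref{tricky} (see \eqref{image3}) it is a union of three segments. The paper's cone $\cca(\ngo)$ is built precisely to dodge this, using the full centralizer $G_{\tg(\ngo)}$ and a single Weyl chamber as in \eqref{cone-def2}, and even so it only yields the sandwich \eqref{cone-cont}; the equality $\cca(\ngo)=\tg(\ngo)_{srn}$ that your program needs at the end is itself Conjecture \ref{Ctsrn}. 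Note finally that the second proposition of Section \ref{cone-sec} already shows that openness alone would force $\cca(\ngo)=\tg(\ngo)_{srn}$ and hence convexity, so the whole conjecture reduces to the openness you have not established.
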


\section{An open and convex cone of Ricci negative derivations}\label{cone-sec}

In this section, we use Theorem \ref{main} to construct, for any nilpotent Lie algebra $\ngo$, an open and convex cone $\cca(\ngo)$ of diagonalizable Ricci negative derivations, containing all the generic ones.    

We keep the basis $\{ e_1,\dots,e_n\}$ of $\ngo$ such that $\tg(\ngo)\subset\Diag(\ngo)$ fixed.  Note that $\tg(\ngo)$ is precisely the orthogonal complement in $\Diag(\ngo)$ of the set $\{ F_{ij}^k:c_{ij}^k\ne 0\}$, where $c_{ij}^k$ are the structural constants of the Lie bracket $\lb$ of $\ngo$ and $F_{ij}^k$ is the diagonal matrix with nonzero entries $-1$ at $i,j$ and $1$ at $k$.  Let $\alpha_1,\dots,\alpha_r\in\tg(\ngo)^*$ be the weights for $\tg(\ngo)$ and let $\ngo=\ngo_1\oplus\dots\oplus\ngo_r$ be the decomposition of $\ngo$ in weight subspaces, that is,
$$
DX=\alpha_i(D)X, \qquad\forall X\in\ngo_i, \quad D\in\tg(\ngo).
$$
We call a derivation $D\in\tg(\ngo)$ {\it generic} when $\alpha_i(D)\ne\alpha_j(D)$ for any $i\ne j$.  Let $\tg(\ngo)_{gen}$ denote the subset of all generic derivations, which is open and dense in $\tg(\ngo)$.  

If $G_{\tg(\ngo)}$ denotes the connected component of the centralizer of $\tg(\ngo)$ in $\Gl(\ngo)$, then $G_{\tg(\ngo)}\subset G_D$ for any $D\in\tg(\ngo)$ and equality holds if and only if $D\in\tg(\ngo)_{gen}$.  In the {\it multiplicity-free} case, i.e.\ $\dim{\ngo_i}=1$ for all $i$, $G_{\tg(\ngo)}$ is given by the connected torus $\Diag(\ngo)_{>0}$ with Lie algebra $\Diag(\ngo)$ and $\tg(\ngo)$ is the only Weyl chamber.  Note that the space $\tg(\ngo)$ is multiplicity-free if and only if there is at least one $D\in\tg(\ngo)$ with pairwise different $n$ eigenvalues; in that case, the basis is necessarily nice (see Section \ref{nice-sec}).

We consider the smallest of the open cones appearing in Theorem \ref{main}, (iii) to define
\begin{equation}\label{cone-def}
\Cone(\ngo):=\left(\RR_{>0}\, m\left(\overline{G_{\tg(\ngo)}\cdot\lb}\right)\cap\Diag(\ngo) + \Diag(\ngo)_{>0}\right)\cap\tg(\ngo)_{\tr>0},
\end{equation}
where $\tg(\ngo)_{\tr>0}:=\left\{ D\in\tg(\ngo):\tr{D}>0\right\}$.  A positive answer to Question \ref{trDp} would allow us to take the intersection with $\tg(\ngo)_{\tr>0}$ out of the definition of the open cone $\cca(\ngo)$.  By Theorem \ref{main}, (iv) one can use any Weyl chamber $\ag_+\subset\Diag(\ngo)$ of $G_{\tg(\ngo)}$ instead of the whole $\Diag(\ngo)$ to define $\Cone(\ngo)$, that is, 
\begin{equation}\label{cone-def2}
\Cone(\ngo) = \left(\RR_{>0}\, m\left(\overline{G_{\tg(\ngo)}\cdot\lb}\right)\cap\ag_+ + \Diag(\ngo)_{>0}\right)\cap\tg(\ngo)_{\tr>0}. 
\end{equation}

Let $\Or(\ngo)$ denote the orthogonal group relative to the inner product making $\{ e_i\}$ an orthonormal basis.  In order to have a group acting on $\cca(\ngo)$, which may be helpful for computing it, we consider the subgroup of the Weyl group $W(\ngo)$ (see \eqref{Wg}) given by 
\begin{equation}\label{Wgort}
W_{ort}(\ngo):=N_{\Aut(\ngo)\cap\Or(\ngo)}(\tg(\ngo))/C_{\Aut(\ngo)\cap\Or(\ngo)}(\tg(\ngo))
\end{equation}
and call it the {\it orthogonal Weyl group}.   

\begin{proposition}
$\Cone(\ngo)$ is an open and convex cone in $\tg(\ngo)$ such that 
\begin{equation}\label{cone-cont}
\tg(\ngo)_{srn}\cap\tg(\ngo)_{gen}\subset\Cone(\ngo)\subset\tg(\ngo)_{srn}.
\end{equation}
Moreover, $W_{ort}(\ngo)$ acts on $\Cone(\ngo)$ by conjugation and $\Cone_t(\ngo):=\{ D\in \Cone(\ngo):\tr{D}=t\}$ is a $W_{ort}(\ngo)$-invariant convex polytope (i.e.\ the convex hull of finitely many points) for any $t>0$.  
\end{proposition}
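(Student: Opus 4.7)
The plan is to verify each asserted property in turn, relying on Theorem~\ref{main}, Theorem~\ref{conv1}, and the $\Aut(\ngo)$-invariance of $\Der(\ngo)_{srn}$. Openness and the cone property are routine: since $\Diag(\ngo)_{>0}$ is open in $\Diag(\ngo)$, the Minkowski sum $\RR_{>0}\,\mm(\overline{G_{\tg(\ngo)}\cdot\lb})\cap\Diag(\ngo)+\Diag(\ngo)_{>0}$ is open there, and its intersection with the open half-space $\tg(\ngo)_{\tr>0}$ of the subspace $\tg(\ngo)$ is open in $\tg(\ngo)$; scaling a decomposition $D=c\alpha+B\in\Cone(\ngo)$ by $t>0$ shows $tD\in\Cone(\ngo)$. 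For convexity I would pass to formulation \eqref{cone-def2}: by Theorem~\ref{conv1} the set $\RR_{>0}\,\mm(\overline{G_{\tg(\ngo)}\cdot\lb})\cap\ag_+$ is convex, and Minkowski-summing with the convex cone $\Diag(\ngo)_{>0}$ and intersecting with the convex half-space $\tg(\ngo)_{\tr>0}$ preserves convexity.

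For the inclusions \eqref{cone-cont}, I would apply Theorem~\ref{main}(iii), using that $G_{\tg(\ngo)}\subset G_D$ for every $D\in\tg(\ngo)$ with equality iff $D$ is generic. Given $D\in\Cone(\ngo)$, the containment $\overline{G_{\tg(\ngo)}\cdot\lb}\subset\overline{G_D\cdot\lb}$ gives $\mm(\overline{G_{\tg(\ngo)}\cdot\lb})\subset\mm(\overline{G_D\cdot\lb})$, so $D$ satisfies condition (iii) and hence $D\in\tg(\ngo)_{srn}$; conversely, for generic $D\in\tg(\ngo)_{srn}$ the equality $G_D=G_{\tg(\ngo)}$ makes (iii) identical to the defining condition of $\Cone(\ngo)$.

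For the $W_{ort}(\ngo)$-invariance I would treat the generic stratum first and then extend by density. For $w\in N_{\Aut(\ngo)\cap\Or(\ngo)}(\tg(\ngo))$, conjugation by $w$ permutes the weight subspaces $\ngo_i$ and hence preserves $\tg(\ngo)_{gen}$; by the $\Aut(\ngo)$-invariance of $\Der(\ngo)_{srn}$ it also preserves $\tg(\ngo)_{srn}$. Combined with the inclusion just proved, this shows $w$-conjugation maps the dense subset $\Cone(\ngo)\cap\tg(\ngo)_{gen}\subset\Cone(\ngo)$ into itself, so by continuity $w\Cone(\ngo)w^{-1}\subset\overline{\Cone(\ngo)}$. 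Applying the same argument to $w^{-1}$ yields $\Cone(\ngo)\subset\overline{w\Cone(\ngo)w^{-1}}$. Since $\Cone(\ngo)$ and $w\Cone(\ngo)w^{-1}$ are both open and convex in $\tg(\ngo)$, each equals the interior of its own closure; thus equality of the closures forces $w\Cone(\ngo)w^{-1}=\Cone(\ngo)$.

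The polytope claim is the most delicate step. By Theorem~\ref{conv1}, $P:=\mm(\overline{G_{\tg(\ngo)}\cdot\lb})\cap\ag_+$ is a convex polytope in $\Diag(\ngo)$; the Minkowski sum of the cone $\RR_{\geq 0}P$ with the closed simplicial cone $\overline{\Diag(\ngo)_{>0}}$, intersected with the subspace $\tg(\ngo)$ and the closed half-space $\{\tr\geq 0\}$, gives the closed polyhedral cone $\overline{\Cone(\ngo)}$ in $\tg(\ngo)$ whose interior is $\Cone(\ngo)$. Slicing by the hyperplane $\{\tr D=t\}$ then yields a polyhedron, and the hard part is establishing its boundedness, equivalently that $\overline{\Cone(\ngo)}\cap\{\tr D=0\}=\{0\}$. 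I would deduce this from the identity $\tr\mm(\mu)=-1$ on the moment map image (a direct computation, since $I\cdot\mu=-\mu$ in the $\Gl(\ngo)$-action): any hypothetical nonzero limit $D=\lim D_k$ with $D_k=c_k\alpha_k+B_k\in\Cone(\ngo)$ and $\tr D_k\to 0$ would, via the equality $\tr B_k=c_k+\tr D_k$ together with the nonnegativity of the entries of $B_k$ and the linear constraints imposed by $D\in\tg(\ngo)$, force $c_k\to 0$ and then $B_k\to 0$. Granted this pointedness, the finitely many extreme rays of $\overline{\Cone(\ngo)}$ meet $\{\tr=t\}$ in finitely many points whose convex hull is $\overline{\Cone_t(\ngo)}$; and the $W_{ort}(\ngo)$-invariance of $\Cone_t(\ngo)$ is inherited from that of $\Cone(\ngo)$ since the trace is conjugation-invariant.
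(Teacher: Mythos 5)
Your treatment of openness, the cone property, convexity via \eqref{cone-def2} together with Theorem~\ref{conv1}, and the two inclusions \eqref{cone-cont} via Theorem~\ref{main} and the identity $G_{\tg(\ngo)}=G_D$ on the generic stratum is exactly the paper's argument, just with the routine steps spelled out. Where you genuinely diverge is the $W_{ort}(\ngo)$-invariance: the paper proves it by a one-line direct computation, writing $D=r\mm(h\cdot\lb)+E$ and using the $\Or(n)$-equivariance of the moment map, $f\mm(h\cdot\lb)f^{-1}=\mm\bigl((fhf^{-1})\cdot\lb\bigr)$ with $fhf^{-1}\in G_{\tg(\ngo)}$, whereas you run a density argument on $\Cone(\ngo)\cap\tg(\ngo)_{gen}$ using the sandwich \eqref{cone-cont}, the $\Aut(\ngo)$-invariance of $\Der(\ngo)_{srn}$, and the fact that a nonempty open convex set is the interior of its closure. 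Your route is longer but valid, and it has the minor virtue of not needing to check that $fEf^{-1}$ stays in $\Diag(\ngo)_{>0}$, which the paper's computation implicitly assumes.

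The one genuine gap is in your boundedness argument for $\Cone_t(\ngo)$. Your reduction to showing $\overline{\Cone(\ngo)}\cap\{\tr D=0\}=\{0\}$ is fine, but the claim that a convergent sequence $D_k=c_k\alpha_k+B_k$ with $\tr D_k\to 0$ ``forces $c_k\to 0$ and then $B_k\to 0$'' is not justified by the ingredients you cite. The relation $\tr B_k=c_k+\tr D_k$ and the nonnegativity of $B_k$ rule out $c_k\to\infty$ (since $\mm(\mu)$ always has a positive diagonal entry for $\mu\neq 0$ nilpotent, an unbounded $c_k\alpha_k$ cannot be cancelled by $B_k\geq 0$), but they do not exclude the case $c_k\to c>0$, which would produce a nonzero limit $D=c\alpha+B$ with $\tr B=c$; excluding this requires additional input, for instance the orthogonality $\mm(\mu)\perp\tg(\ngo)$ coming from $\tg(\ngo)\subset\Der(\ngo,\mu)$ for every $\mu\in\overline{G_{\tg(\ngo)}\cdot\lb}$, combined with the constraints defining $\tg(\ngo)$. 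To be fair, the paper's own proof dispatches the polytope claim with a bare citation of Theorem~\ref{conv1} and \eqref{cone-def2} and does not address boundedness either; you deserve credit for isolating it as the delicate point, but the argument you sketch does not close it.
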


\begin{proof}
The open cone $\Cone(\ngo)$ is convex and $\Cone_t(\ngo)$ is a polytope by Theorem \ref{conv1} and \eqref{cone-def2}.  Condition \eqref{cone-cont} follows from Theorem \ref{main} and the fact that $G_{\tg(\ngo)} = G_D$ for any  $D\in\tg(\ngo)_{gen}$.  

It only remains to prove that $\Cone(\ngo)$ is $W_{ort}(\ngo)$-invariant.  If $f\in W_{ort}(\ngo)$ and  $D\in\Cone(\ngo)$, say $D=r\mm(h\cdot\lb)+E$, for some $r>0$, $h\in G_{\tg(\ngo)}$ and $E\in\Diag(\ngo)_{>0}$, then 
$$
fDf^{-1} = rf\mm(h\cdot\lb)f^{-1}+fEf^{-1} = r\mm((fhf^{-1})\cdot\lb)+fEf^{-1},
$$   
with $fhf^{-1}\in G_{\tg(\ngo)}$ and thus $fDf^{-1}\in\cca(\ngo)$, concluding the proof.  
\end{proof}

In view of the suggestive property  \eqref{cone-cont} satisfied by $\Cone(\ngo)$, it is natural to expect that the following must hold.  

\begin{conjecture}\label{Ctsrn}
$\Cone(\ngo)=\tg(\ngo)_{srn}$.     
\end{conjecture}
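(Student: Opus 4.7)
The plan is to establish the remaining inclusion $\tg(\ngo)_{srn}\subset\Cone(\ngo)$, which combined with the preceding proposition yields equality. The cleanest strategy is to reduce the conjecture to the openness of $\tg(\ngo)_{srn}$ in $\tg(\ngo)$ (the first half of Conjecture \ref{tsrn}). Assume openness for a moment. Then $\tg(\ngo)_{srn}\cap\tg(\ngo)_{gen}$ is open and dense in $\tg(\ngo)_{srn}$, because $\tg(\ngo)_{gen}$ is open and dense in $\tg(\ngo)$. By \eqref{cone-cont} this intersection equals $\Cone(\ngo)\cap\tg(\ngo)_{gen}$, an open dense subset of the open convex cone $\Cone(\ngo)$. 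Hence
$$
\tg(\ngo)_{srn}\subset\overline{\tg(\ngo)_{srn}\cap\tg(\ngo)_{gen}}=\overline{\Cone(\ngo)\cap\tg(\ngo)_{gen}}=\overline{\Cone(\ngo)}.
$$
Since $\tg(\ngo)_{srn}$ is open, it is contained in $\operatorname{int}(\overline{\Cone(\ngo)})$; and for an open convex set in a finite-dimensional vector space the interior of the closure coincides with the set itself, so $\tg(\ngo)_{srn}\subset\Cone(\ngo)$ follows.

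It thus remains to prove openness of $\tg(\ngo)_{srn}$. Given $D\in\tg(\ngo)_{srn}$, Theorem \ref{main} produces $r>0$, $\mu\in\overline{G_D\cdot\lb}$ and $E\in\Diag(\ngo)_{>0}$ with $D=r\,\mm(\mu)+E$. For a perturbation $D'=D+tD''\in\tg(\ngo)$ with $D''\in\tg(\ngo)$ and $|t|$ small, I set $E':=E+tD''$ and try $(r,\mu,E')$ as a witness for $D'$. Clearly $E'\in\Diag(\ngo)_{>0}$ for $|t|$ small, $\tr D'>0$, and $D'=r\,\mm(\mu)+E'$. The only issue is whether $\mu\in\overline{G_{D'}\cdot\lb}$. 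If $D'$ has $G_{D'}\supseteq G_D$, this is automatic; the delicate case is $G_{D'}\subsetneq G_D$, which occurs precisely when the weight collisions defining $D$ are broken by the perturbation, typically with $D'\in\tg(\ngo)_{gen}$ where $G_{D'}=G_{\tg(\ngo)}$.

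To handle this critical case, the main technical step is to prove the sharper moment-map identity
$$
\mm\!\left(\overline{G_D\cdot\lb}\right)\cap\Diag(\ngo)=\mm\!\left(\overline{G_{\tg(\ngo)}\cdot\lb}\right)\cap\Diag(\ngo),
$$
which would allow the original witness $\mu$ to be replaced by some $\mu'\in\overline{G_{\tg(\ngo)}\cdot\lb}\subset\overline{G_{D'}\cdot\lb}$ with the same image under $\mm$. The approach is via Kempf--Ness and Kirwan-type convexity for the reductive group $G_D$ acting on $\Lambda^2\ngo^*\otimes\ngo$: since $\tg(\ngo)\subset\pg_D:=\Lie(G_D)\cap\sym(\ngo)$ lies inside a maximal $\RR$-split abelian subspace of $\pg_D$, the condition $\mm(\mu)\in\Diag(\ngo)$ forces $\mu$ to be Kempf--Ness critical for the action generated by the orthogonal complement of $\tg(\ngo)$ inside $\pg_D$. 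Polar-decomposing any $g\in G_D$ with $g\cdot\lb=\mu$ (or approximating in the closure) as $g=k\exp(X)$ with $k\in K_D$ and $X\in\pg_D$, equivariance $\mm(k\nu)=k\mm(\nu)k^{-1}$ together with criticality should let one absorb the non-$\tg(\ngo)$ component of $X$ into a $K_D$-rotation, producing $\mu'\in\overline{G_{\tg(\ngo)}\cdot\lb}$ with $\mm(\mu')=\mm(\mu)$.

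The main obstacle is precisely this passage from $G_D$ to $G_{\tg(\ngo)}$ at the level of orbit closures: one must show that the extra directions in $G_D/G_{\tg(\ngo)}$ do not enlarge $\mm(\overline{G\cdot\lb})\cap\Diag(\ngo)$, which is delicate because $G_D$ is a genuinely larger reductive group whenever $D$ has weight multiplicities, and its non-toric Kirwan polytope is a priori richer than that of $G_{\tg(\ngo)}$. A positive answer to Question \ref{trDp} would simplify matters by removing the trace-positivity intersection throughout. As a fallback, one could bypass the moment-map identity and instead prove openness of $\tg(\ngo)_{srn}$ by an implicit-function-theorem argument perturbing the inner product on $\sg_D$ along a smooth family that keeps the derivation symmetric and the metric Ricci negative, in the spirit of the variational techniques used for Einstein solvmanifolds and Ricci solitons.
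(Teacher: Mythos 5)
The statement you are trying to prove is presented in the paper as a conjecture; no proof of it is given there. What the paper does prove is exactly your first reduction: the proposition immediately following the conjecture shows that if $\tg(\ngo)_{srn}$ is open in $\tg(\ngo)$, then $\Cone(\ngo)=\tg(\ngo)_{srn}$. Your argument for this reduction (density of $\tg(\ngo)_{gen}$, passage to closures, and $\operatorname{int}(\overline{C})=C$ for an open convex set) is correct and only cosmetically different from the paper's, which instead places $D$ on a segment inside a neighborhood $U\subset\tg(\ngo)_{srn}$ with endpoints in $\tg(\ngo)_{gen}$ and invokes convexity of $\Cone(\ngo)$ together with \eqref{cone-cont}. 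Up to this point you are on solid ground.

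The gap is everything after that. Your proof of openness of $\tg(\ngo)_{srn}$ is not a proof: the two load-bearing steps are the moment-map identity $\mm\left(\overline{G_D\cdot\lb}\right)\cap\Diag(\ngo)=\mm\left(\overline{G_{\tg(\ngo)}\cdot\lb}\right)\cap\Diag(\ngo)$ and the claim that Kempf--Ness criticality lets you ``absorb'' the non-$\tg(\ngo)$ component of $X$ into a $K_D$-rotation, and neither is established (your own wording --- ``should let one absorb'', ``the main obstacle is precisely this passage'' --- concedes this). Worse, the proposed identity is exactly where the difficulty of the conjecture is concentrated, and there is reason to doubt it: the paper deliberately builds $\Cone(\ngo)$ from the \emph{smallest} group $G_{\tg(\ngo)}$ precisely because enlarging the acting reductive group can genuinely enlarge $\mm(\overline{G\cdot\lb})\cap\Diag(\ngo)$. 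Example \ref{tricky} exhibits this phenomenon one level down: $\mm\left(\overline{\Diag(\ngo)_{>0}\cdot\lb}\right)\cap\Diag(\ngo)$ is a union of three segments while $\mm\left(\overline{G_{\tg(\ngo)}\cdot\lb}\right)\cap\Diag(\ngo)$ is a union of two triangles. For a non-generic $D$ one has $G_{\tg(\ngo)}\subsetneq G_D$, and you would need to rule out the analogous strict enlargement; nothing in your sketch does so. If the identity were available, the conjecture would indeed follow by your perturbation $(r,\mu,E+tD'')$, but as written the proposal establishes only the conditional statement already proved in the paper, not the conjecture itself. The implicit-function-theorem ``fallback'' is likewise only an idea, not an argument.
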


This has recently been proved for any nilpotent Lie algebra of dimension $\leq 5$ in \cite{Gtr}.  Note that the conjecture holds in the rank one case (i.e. $\dim{\tg(\ngo)}=1$).  Furthermore, in \cite{Gtr}, the cone $\cca(\ngo)$ is computed for some classical families of nilpotent Lie algebras, obtaining for example that the convex polytope $\cca_1(\hg_{2n+1})$ is the $n$-dimensional hypercube for the Heisenberg Lie algebra $\hg_{2n+1}$.  

The following result shows that we only need $\tg(\ngo)_{srn}$ to be open in $\tg(\ngo)$ to prove Conjectures \ref{tsrn} and \ref{Ctsrn}.  

\begin{proposition}
If $\tg(\ngo)_{srn}$ is open in $\tg(\ngo)$, then $\Cone(\ngo)=\tg(\ngo)_{srn}$.
\end{proposition}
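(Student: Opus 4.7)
The plan is to combine the already-established sandwich \eqref{cone-cont} with the openness hypothesis via a density argument; the inclusion $\Cone(\ngo)\subset\tg(\ngo)_{srn}$ is given by the preceding proposition, so only $\tg(\ngo)_{srn}\subset\Cone(\ngo)$ needs to be proved.

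First I would fix $D\in\tg(\ngo)_{srn}$ and, using the hypothesis that $\tg(\ngo)_{srn}$ is open in $\tg(\ngo)$, choose an open neighborhood $U\subset\tg(\ngo)_{srn}$ of $D$. Since $\tg(\ngo)_{gen}$ is open and dense in $\tg(\ngo)$, the intersection $U\cap\tg(\ngo)_{gen}$ is dense in $U$, and any $D'$ lying in it satisfies $D'\in\tg(\ngo)_{srn}\cap\tg(\ngo)_{gen}\subset\Cone(\ngo)$ by \eqref{cone-cont}. Picking a sequence $D_k\to D$ in $U\cap\tg(\ngo)_{gen}$ therefore shows $D\in\overline{\Cone(\ngo)}$, and hence $\tg(\ngo)_{srn}\subset\overline{\Cone(\ngo)}$.

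To close the argument I would invoke the elementary convex-analysis fact that any nonempty open convex subset $C$ of a real vector space satisfies $\operatorname{int}(\overline{C})=C$. Applied to $C=\Cone(\ngo)$, which is open and convex in $\tg(\ngo)$ by the preceding proposition, this yields $\operatorname{int}(\overline{\Cone(\ngo)})=\Cone(\ngo)$. Since $\tg(\ngo)_{srn}$ is open in $\tg(\ngo)$ and contained in $\overline{\Cone(\ngo)}$, it is contained in the interior of $\overline{\Cone(\ngo)}$, namely in $\Cone(\ngo)$, giving the required inclusion and therefore the claimed equality.

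The argument is in essence a topological formality; the substantive content has already been extracted in Theorem \ref{main} (from which \eqref{cone-cont} is derived) and in the convexity of $\Cone(\ngo)$ proved in the previous proposition. The only point requiring a verification is the convex-analysis lemma, but it is standard: given $x\in\operatorname{int}(\overline{C})$ and any $y\in C$, one extends the segment from $y$ through $x$ slightly past into $\overline{C}$, approximates the extended endpoint by elements of $C$, and uses openness of $C$ near $y$ together with convexity to produce points of $C$ converging to $x$ that carry a uniform ball in $C$. So no serious obstacle arises; the whole delicacy of the result has been pushed into the hypothesis that $\tg(\ngo)_{srn}$ be open, which is precisely the content of Conjecture \ref{tsrn} and remains the genuinely hard open question.
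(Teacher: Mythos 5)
Your argument is correct and is essentially the paper's: both reduce to the sandwich \eqref{cone-cont} on the open dense set $\tg(\ngo)_{gen}$ and then combine the convexity of $\cca(\ngo)$ with the openness hypothesis on $\tg(\ngo)_{srn}$. The only cosmetic difference is that the paper places $D$ directly on a segment inside $U$ with both endpoints in $\tg(\ngo)_{gen}$, so that convexity puts $D$ in $\cca(\ngo)$ at once, whereas you pass through $\overline{\cca(\ngo)}$ and invoke the standard fact $\operatorname{int}\bigl(\overline{C}\bigr)=C$ for an open convex set $C$; both routes are sound.
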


\begin{proof}
For a given $D\in\tg(\ngo)_{srn}$, consider an open neighborhood $U\subset \tg(\ngo)_{srn}$.  It is easy to show that $D$ belongs to a segment contained in $U$ with extremes in $\tg(\ngo)_{gen}$, and since such extremes belong to the convex cone $\Cone(\ngo)$ by \eqref{cone-cont} we obtain that $D\in\Cone(\ngo)$, concluding the proof.
\end{proof}

Let us summarize the different cones in the maximal torus $\tg(\ngo)\subset\Der(\ngo)$ that we have considered in this and the previous sections, we refer to \eqref{trn-def}, Definition \ref{rnd-def}, \eqref{tsrn-def} and \eqref{cone-def} for their definition, respectively:
\begin{equation}\label{conts}
\tg(\ngo) \supseteq \tg(\ngo)_{rn} \supseteq \Der(\ngo)_{rn}\cap\tg(\ngo) \supseteq \tg(\ngo)_{srn} \supseteq \cca(\ngo). 
\end{equation}
They are all known to be open except $\tg(\ngo)_{srn}$; on the other hand, $\cca(\ngo)$ is the only one known to be convex (see Conjectures \ref{trn}, \ref{Diff} and \ref{Ctsrn}).  

\begin{remark}
It is important to notice that throughout all this section and Section \ref{srn-sec}, $\tg(\ngo)$ can be replaced by $\Der(\ngo)\cap\Diag(\ngo)$, where $\Diag(\ngo)$ is defined with respect to any basis of $\ngo$.  It is not necessary for $\Der(\ngo)\cap\Diag(\ngo)$ to be maximal.  
\end{remark}

\subsection{Nice bases}\label{nice-sec}
The following type of bases is very friendly with the computation of Ricci curvature.  

\begin{definition}\label{nice-def}
A basis $\{ e_1,\dots,e_n\}$ of a Lie algebra is said to be {\it nice} if every bracket $[e_i,e_j]$ is a scalar multiple of some element $e_k$ in the basis and two different brackets $[e_i,e_j]$, $[e_r,e_s]$ can be a nonzero multiple of the same $e_k$ only if $\{ i,j\}$ and $\{ r,s\}$ are either equal or disjoint.
\end{definition}

It is proved in \cite{nicebasis} (see also \cite[Lemma 3.12]{RN}) that a basis $\{ e_i\}$ is a nice basis if and only if
\begin{equation}\label{nice-m}
\mm\left(\overline{\Diag(\ngo)_{>0}\cdot\lb}\right) = \CH_{\lb}:= \CH(F_{ij}^k:c_{ij}^k\ne 0),
\end{equation}
where $c_{ij}^k$ are the structure coefficients and $\CH$ denotes convex hull.  If  $\{ e_i\}$ is a nice basis, then by \eqref{nice-m},
$$
\CH_{\lb}\subset \mm\left(\overline{G_{\tg(\ngo)}\cdot\lb}\right)\cap\Diag(\ngo),
$$
and equality holds if in addition $\tg(\ngo)$ is multiplicity-free since in that case, $G_{\tg(\ngo)}=\Diag(\ngo)_{>0}$ and $\ag_+=\Diag(\ngo)$ is the only Weyl chamber.

The following example shows that both $\mm\left(\overline{\Diag(\ngo)_{>0}\cdot\lb}\right)\cap\Diag(\ngo)$ and $\mm\left(\overline{G_{\tg(\ngo)}\cdot\lb}\right)\cap\Diag(\ngo)$ can be tricky subsets if the basis is not nice.  

\begin{example}\label{tricky} (see \cite[Example 3.17]{RN} for a detailed treatment). 
Let $\ngo$ be the $5$-dimensional nilpotent Lie algebra with basis $\{ e_1,\dots,e_5\}$ and Lie bracket
$$
[e_1,e_2]=e_3+e_4, \quad [e_1,e_3]=e_5, \quad [e_1,e_4]=e_5,
$$
for which $\CH_{\lb}=\CH(F_{12}^3,F_{12}^4,F_{13}^5,F_{14}^5)$ is a rectangle centered at $\mm(\lb)$ (see Figure \ref{tricky-fig}) and  
$$
G_{\tg(\ngo)}=\left\{\left[\begin{smallmatrix}
h_1&&&\\
&h_2&&\\
&&H&\\
&&&h_5
\end{smallmatrix}\right]: H\in\Gl_2^+(\RR), \quad h_i>0\right\}. 
$$
We consider $G_{\tg(\ngo)}$ acting on the cone $\overline{G_{\tg(\ngo)}\cdot\lb}$ of nilpotent Lie brackets $\mu=\mu(x,y,z,w)$ defined by
$$
\mu(e_1,e_2)=xe_3+ye_4, \quad \mu(e_1,e_3)=ze_5, \quad \mu(e_1,e_4)=we_5, \qquad x,y,z,w\geq 0.
$$
The moment map $\mm:\overline{G_{\tg(\ngo)}\cdot\lb}\longrightarrow\sym(\ngo)$ is given by
$$
\mm(\mu) = \tfrac{1}{x^2+y^2+z^2+w^2}\left(x^2F_{12}^3+y^2F_{12}^4+z^2F_{13}^5+w^2F_{14}^5+(xy-zw)(E_{34}+E_{43})\right),  
$$
where $E_{ij}$ denotes as usual the matrix whose only nonzero entry is a $1$ at $ij$, and so
\begin{align}
\mm\left(\overline{G_{\tg(\ngo)}\cdot\lb}\right)\cap\Diag(\ngo) =& \{ aF_{12}^3+bF_{12}^4+cF_{13}^5+dF_{14}^5: a,b,c,d\geq 0, \label{image1}\\
&\quad a+b+c+d=1, \quad ab=cd\}. \notag
\end{align}
It is not so hard to see by using \eqref{image1} that $\mm\left(\overline{G_{\tg(\ngo)}\cdot\lb}\right)\cap\Diag(\ngo)$ is the union of two triangles as in Figure \ref{tricky-fig}, each of which being the intersection with one of the Weyl chambers 
$$
(\ag_+)_1=\{\Diag(a_1,\dots,a_5):a_3\leq a_4\}, \qquad (\ag_+)_2=\{\Diag(a_1,\dots,a_5):a_3\geq a_4\}. 
$$
On the other hand,  the closure $\overline{\Diag(\ngo)_{>0}\cdot\lb}$ is the union of the open orbit $\Diag(\ngo)_{>0}\cdot\lb=\{\mu:xz=yw\ne 0\}$ and other eight nonzero $T$-orbits corresponding to the edges and vertexes of the rectangle $\CH_{\lb}$ (see \eqref{deg-caras}).   One therefore obtains from \eqref{image1} that
\begin{align}
\mm\left(\overline{\Diag(\ngo)_{>0}\cdot\lb}\right)\cap\Diag(\ngo) =& \{ aF_{12}^3+bF_{12}^4+cF_{13}^5+dF_{14}^5: a,b,c,d\geq 0, \label{image3}\\
&\quad a+b+c+d=1, \quad ab=cd, \quad ac=bd\},  \notag
\end{align}
given by the union of three segments as in Figure \ref{tricky-fig}.  
\end{example}

\begin{figure}
\begin{tikzpicture}[scale=0.7]
\draw[very thin,color=gray,step=1cm] (-1.5,-2.5) grid (7.5,6.5);

\draw[fill=orange, opacity=0.8] (0,0) -- (6,0) -- (3,2.1) -- (0,0);
\draw[fill=orange, opacity=0.8]  (3,2.1) -- (6,4.2) -- (0,4.2) -- (3,2.1);
\draw  (0,0) -- (0,4.2);
\draw  (6,0) -- (6,4.2);
\draw[ultra thick, blue]  (0,0) -- (6,0);
\draw[ultra thick, blue]  (3,0) -- (3,4.2);
\draw[ultra thick, blue]  (0,4.2) -- (6,4.2);

\draw[fill] (3,2.1) circle (2pt) node[right] {$\mm(\lb)$}; 
\draw (3,5.5) node[thick, orange] {$\mm\left(\overline{G_{\tg(\ngo)}\cdot\lb}\right)\cap \Diag(\ngo)$};
\draw (3,-1.5) node[thick, blue] {$\mm\left(\overline{\Diag(\ngo)_{>0}\cdot\lb}\right)\cap\Diag(\ngo)$};

\draw[fill] (0,0) circle (2pt) node[below left] {$F_{14}^5$}; 
\draw[fill] (6,0) circle (2pt) node[below right] {$F_{12}^3$}; 
\draw[fill] (0,4.2) circle (2pt) node[above left] {$F_{13}^5$}; 
\draw[fill] (6,4.2) circle (2pt) node[above right] {$F_{12}^4$}; 
\end{tikzpicture}
\caption{Moment map images}\label{tricky-fig}
\end{figure}
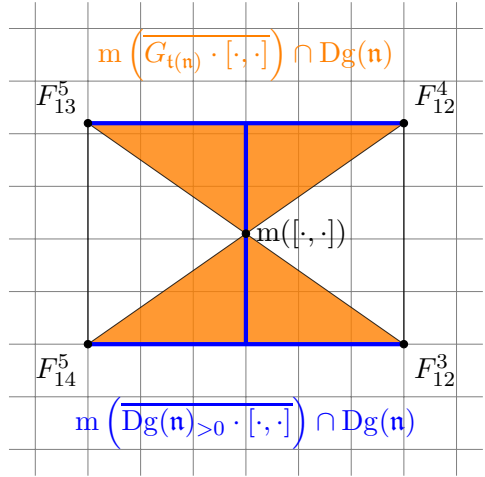

The following fact from Lie theory will be useful.  Recall that any ideal of a nilpotent Lie algebra meets the center non-trivially by Engel's Theorem.   

\begin{lemma}\label{Lie}
Let $\ngo$ be a nilpotent Lie algebra and assume that $\ngo=\ngo_0\oplus\ngo_1$, where $\ngo_0$ is a subalgebra, $\ngo_1$ an ideal and $\zg(\ngo)\subset\ngo_1$.  Then for any nonzero $X_0\in\ngo_0$, there exists $X_1\in\ngo_1$ such that $[X_0,X_1]\ne 0$.
\end{lemma}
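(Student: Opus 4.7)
The plan is to show, by contradiction, that the centralizer
\[
C \;:=\; C_{\ngo_0}(\ngo_1) \;=\; \{X\in\ngo_0 : [X,\ngo_1]=0\}
\]
is trivial; the lemma then follows immediately, since a nonzero $X_0\in\ngo_0$ with $[X_0,\ngo_1]=0$ would belong to $C$.

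First I would verify that $C$ is an ideal of the subalgebra $\ngo_0$. This is a one-line Jacobi computation: for $Y\in\ngo_0$, $X\in C$, $Z\in\ngo_1$, one has $[Y,Z]\in\ngo_1$ (because $\ngo_1$ is an ideal of $\ngo$) and $[X,Z]=0$, so
\[
[[Y,X],Z] \;=\; [Y,[X,Z]] - [X,[Y,Z]] \;=\; 0,
\]
showing $[Y,X]\in C$. Note $[Y,X]\in\ngo_0$ since $\ngo_0$ is a subalgebra, so this actually says $C$ is an ideal in $\ngo_0$.

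Now suppose for contradiction that $C\neq 0$. Since $\ngo_0$ is a subalgebra of the nilpotent Lie algebra $\ngo$, it is itself nilpotent, and by Engel's theorem every nonzero ideal of $\ngo_0$ meets the center $\zg(\ngo_0)$ nontrivially (this is the fact recalled just before the lemma statement, applied inside $\ngo_0$). Pick a nonzero $Y\in C\cap\zg(\ngo_0)$. By construction $Y$ commutes with $\ngo_1$ (as $Y\in C$) and with $\ngo_0$ (as $Y\in\zg(\ngo_0)$), hence $Y\in\zg(\ngo)$. But $Y\in\ngo_0\setminus\{0\}$ and $\ngo=\ngo_0\oplus\ngo_1$, so $Y\notin\ngo_1$, contradicting the hypothesis $\zg(\ngo)\subset\ngo_1$.

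The only step requiring any care is the verification that $C$ is an ideal of $\ngo_0$, since it uses both that $\ngo_1$ is an ideal of the whole of $\ngo$ (not merely of $\ngo_0$) and that $\ngo_0$ is closed under the bracket; everything else is a direct application of Engel's theorem to $\ngo_0$ together with the splitting $\ngo=\ngo_0\oplus\ngo_1$. I do not foresee any genuine obstacle here.
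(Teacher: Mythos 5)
Your argument is correct. It differs from the paper's proof in packaging rather than in substance. The paper fixes the offending element $X_0$ with $[X_0,\ngo_1]=0$ and runs an explicit induction on the descending chain $\bg_k=[\dots[X_0,\ngo_0],\dots,\ngo_0]$, proving $[\bg_k,\ngo_1]=0$ for all $k$ by exactly the Jacobi computation you use to show that $C=C_{\ngo_0}(\ngo_1)$ is an ideal of $\ngo_0$; nilpotency of $\ngo_0$ makes the chain terminate, and the last nonzero $\bg_k$ lands in $\ngo_0\cap\zg(\ngo)=0$. You instead pass directly to the full centralizer $C$, check it is an ideal of the nilpotent subalgebra $\ngo_0$, and invoke the fact that a nonzero ideal of a nilpotent Lie algebra meets the center --- a fact the paper recalls immediately before the lemma but does not actually use in its own proof. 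Your route is a bit more conceptual and outsources the induction to Engel's theorem; the paper's is self-contained, with the termination of the chain $\bg_k$ playing the role that ``ideal meets center'' plays for you. In both cases the two essential inputs are the same: $\ngo_1$ must be an ideal of all of $\ngo$ (so that $[\ngo_0,\ngo_1]\subset\ngo_1$ feeds the Jacobi step), and $\zg(\ngo)\subset\ngo_1$ together with the directness of the sum rules out the central element produced in $\ngo_0$. Both proofs are complete and correct.
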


\begin{proof}
Assume there is a nonzero $X_0\in\ngo_0$ such that $[X_0,\ngo_1]=0$.  Since $\ngo_0$ is nilpotent and $X_0\notin\zg(\ngo)$, there exists $k\in\NN$ such that
$$
\bg_k:= [\dots[X_0,\ngo_0],\dots,\ngo_0] \quad (k\mbox{-times})
$$
is nonzero and $\bg_{k+1}=0$.  It follows by induction that $[\bg_k,\ngo_1]=0$ for all $k$; indeed, for any  $Z\in\bg_k$, $X\in\ngo_0$ (i.e. $[Z,X]\in\bg_{k+1}$) and  $Y\in\ngo_1$,
$$
[[Z,X],Y] = [[Z,Y],X] + [Z,[X,Y]] = 0, 
$$
by inductive hypothesis.  Thus $\bg_k\subset\ngo_0\cap\zg(\ngo)=\{ 0\}$, which is a contradiction.
\end{proof}

As an application, we next show that any nilpotent Lie algebra admitting a non-negative diagonalizable (relative to a nice basis) derivation which is positive on the center is a RN-nilradical.  

\begin{proposition}
Let $\ngo$ be a nilpotent Lie algebra endowed with a nice basis $\{ e_i\}$.  If $D\in\Der(\ngo)\cap\Diag(\ngo)$ satisfies that $D\geq 0$ and $D|_{\zg(\ngo)}>0$, then $D\in\Der(\ngo)_{srn}$.
\end{proposition}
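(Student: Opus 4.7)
The plan is to verify condition (iii) of Theorem \ref{main}. The trace condition $\tr D>0$ is immediate from $D\geq 0$, $D|_{\zg(\ngo)}>0$, and $\zg(\ngo)\neq 0$. For the cone membership, since $D$ is diagonal we have $\Diag(\ngo)_{>0}\subset G_D$, so by the nice basis identity \eqref{nice-m},
$$
\CH_{\lb}=\mm\left(\overline{\Diag(\ngo)_{>0}\cdot\lb}\right)\subset \mm\left(\overline{G_D\cdot\lb}\right)\cap\Diag(\ngo).
$$
It therefore suffices to produce $P\in\CH_{\lb}$ and $r>0$ such that $E:=D-rP$ lies in $\Diag(\ngo)_{>0}$.

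The decomposition driving the construction of $P$ is $\ngo=\ngo_0\oplus\ngo_1$, where $\ngo_0:=\spann\{e_i:d_i=0\}$ and $\ngo_1:=\spann\{e_i:d_i>0\}$, writing $De_i=d_ie_i$.  Since $D$ is a diagonal derivation with nonnegative eigenvalues and the basis is nice (so $d_k=d_i+d_j$ whenever $c_{ij}^k\neq 0$), it is straightforward to check that $\ngo_0$ is a subalgebra, $\ngo_1$ is an ideal, and the hypothesis $D|_{\zg(\ngo)}>0$ gives $\zg(\ngo)\subset\ngo_1$; this is exactly the setup for Lemma \ref{Lie}.

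The heart of the construction is then as follows.  For each basis vector $e_i\in\ngo_0$, Lemma \ref{Lie} produces some $X_1\in\ngo_1$ with $[e_i,X_1]\neq 0$, and expanding $X_1$ in the basis together with niceness yields indices $j(i),k(i)$ such that $e_{j(i)}\in\ngo_1$ and $c_{i,j(i)}^{k(i)}\neq 0$; nilpotency of $\ad_{e_i}$ forces $i,j(i),k(i)$ to be pairwise distinct.  Setting $I:=\{i:d_i=0\}$, I would take
$$
P:=\frac{1}{|I|}\sum_{i\in I}F_{i,j(i)}^{k(i)}\in\CH_{\lb},
$$
and examine the diagonal entries: for $m\in I$, only the term $i=m$ contributes to $(P)_{mm}$ (since $j(i),k(i)\in\ngo_1$ lie outside $I$ for every $i\in I$), giving $(P)_{mm}=-1/|I|<0$; for $m\notin I$ the entry $(P)_{mm}$ is bounded while $d_m>0$.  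Hence $D-rP\in\Diag(\ngo)_{>0}$ for any sufficiently small $r>0$, completing the verification of (iii).

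The step carrying all the weight is the invocation of Lemma \ref{Lie}: one needs not merely some partner $X\in\ngo$ with $[e_i,X]\neq 0$ (which would follow from $e_i\notin\zg(\ngo)$), but a partner inside $\ngo_1$, so that the $+1$ of the resulting $F_{i,j(i)}^{k(i)}$ lands at a position with $d_{k(i)}>0$ (absorbable by $E$) rather than at a position in $I$ where $d=0$ (which would destroy positivity of $E$).  The hypothesis $D|_{\zg(\ngo)}>0$ is what enables this refined choice, via the inclusion $\zg(\ngo)\subset\ngo_1$ required by Lemma \ref{Lie}.
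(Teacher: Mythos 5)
Your proof is correct and follows essentially the same route as the paper's: the same decomposition $\ngo=\ker D\oplus\ngo_1$ feeding into Lemma \ref{Lie}, the same construction of a convex combination of weights $F_{i,j(i)}^{k(i)}$ with $e_{j(i)},e_{k(i)}\in\ngo_1$ that is strictly negative on $\ker D$, and the same conclusion via \eqref{nice-m} and Theorem \ref{main}(iii). The only (immaterial) difference is that you normalize the sum to land in $\CH_{\lb}$ itself rather than in the cone $\RR_{>0}\,\CH_{\lb}$, and you spell out a few verifications the paper leaves implicit.
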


\begin{proof}
We consider the decomposition $\ngo=\ngo_0\oplus\ngo_1$, where $\ngo_0$ is the kernel of $D$ and $\ngo_1$ the sum of all eigenspaces of $D$ with positive eigenvalues.  Note that such decomposition satisfies the hypothesis of Lemma \ref{Lie}.  We can assume that $\{ e_1,\dots,e_r\}$ is a basis for $\ngo_0$.  It follows from Lemma \ref{Lie} that for each $i=1,\dots,r$, there exist $j_i,k_i$ such that $e_{j_i},e_{k_i}\in\ngo_1$ and $c_{ij_i}^{k_i}\ne 0$.  Thus
$$
M:=F_{1j_1}^{k_1}+\dots+F_{rj_r}^{k_r},
$$
satisfies that $M|_{\ngo_0}<0$ and so $D-\epsilon M>0$ for some $\epsilon >0$ since $D|_{\ngo_1}>0$.  Since $M\in \RR_{>0}m\left(\overline{\Diag(\ngo)_{>0}\cdot\lb}\right)$ by \eqref{nice-m}, we conclude from Theorem \ref{main}, (iii) that $D\in\Der(\ngo)_{srn}$.
\end{proof}

\section{Concluding remarks} 

The present state of the art of the main question of which Lie algebras admit $\Ricci<0$ metrics can be summarized as follows: 

\begin{enumerate}[1)]
\item[ ]
\item {\it Unimodular and solvable}: not allowed.
\item[ ]
\item {\it Unimodular and non-solvable}: only semisimple Lie algebras are allowed and most non-compact simple Lie algebras are known to admit.  
\item[ ]
\item {\it Non-unimodular and non-solvable}: there are several large classes of examples, including all compact and most non-compact semisimple Lie algebras as possible Levi factors as well as many of their representations as possible nilradicals.  On the other hand, no obstruction is known and not a single example where the non-existence of $\Ricci<0$ metrics can be proved is available.    
\item[ ]
\item {\it Non-unimodular and solvable}: only one obstruction is available and if all the conjectures in this paper turn out to be true, then $\sg$ admits $\Ricci<0$ metrics if and only if the diagonalizable part of some $\ad{Y}$ restricted to the nilradical $\ngo$ belongs (up to automorphism conjugation) to the open and convex cone of derivations $\cca(\ngo)\subset\tg(\ngo)$ introduced in Section \ref{cone-sec}.  
\item[ ]
\end{enumerate}

Independently of the truth of the characterization given in 4), it is important to highlight the role of the beautiful open and convex cone $\cca(\ngo)$ defined via a moment map, in actually producing $\Ricci<0$ metrics on different solvable Lie algebras with the same nilpotent Lie algebra $\ngo$ as nilradical.

\section{Appendix: the moment map}\label{mm-sec}

We consider the vector space $V:=\lam$ of all skew-symmetric algebras of dimension $n$, on which $\G$ is acting by $g\cdot\mu:=g\mu(g^{-1}\cdot,g^{-1}\cdot)$ and $\g$ by
$$
E\cdot\mu=E\mu(\cdot,\cdot)-\mu(E\cdot,\cdot)-\mu(\cdot,E\cdot),\qquad E\in\g,\quad\mu\in V.
$$
The weight vectors for the above representation are $\mu_{ijk}:=(e^i\wedge e^j)\otimes e_k$ with corresponding weights
$$
F_{ij}^k:=E_{kk}-E_{ii}-E_{jj}\in\tg^n, \qquad i<j,
$$
where $\tg^n$ denotes the set of all diagonal $n\times n$ matrices.  The structural constants $c(\mu)_{ij}^k$ of an
algebra $\mu\in V$ are therefore given by
$$
\mu(e_i,e_j)=\sum_{k}c(\mu)_{ij}^k\, e_k,
\qquad \mu=\sum_{i<j,\,k} c(\mu)_{ij}^k\, \mu_{ijk}.
$$
All these vector spaces will be endowed with their canonical inner products.

The moment map (or $\G$-gradient map) from real geometric invariant theory (see \ \cite{HnzSchStt,BhmLfn1} for further information) for the above representation is the $\Or(n)$-equivariant map
$$
\mm:V\smallsetminus\{ 0\}\longrightarrow\sym(n),
$$
defined by
\begin{equation}\label{defmm}
\la \mm(\mu),E\ra=\tfrac{1}{|\mu|^2}\left\langle E\cdot\mu,\mu\right\rangle, \qquad \mu\in
V\smallsetminus\{ 0\}, \quad E\in\sym(n), 
\end{equation}
or equivalently, for any $X,Y\in\RR^n$, 
\begin{equation}\label{defmm2}
\la \mm(\mu)X,Y\ra = -\unm\sum\la\mu(X,e_i),e_j\ra\la\mu(Y,e_i),e_j\ra + \unc \sum\la\mu(e_i,e_j),X\ra\la\mu(e_i,e_j),Y\ra.   
\end{equation}
We are using $\g=\sog(n)\oplus\sym(n)$ as a Cartan decomposition, where $\sog(n)$ and $\sym(n)$ denote the subspaces of skew-symmetric and symmetric matrices, respectively.  Note that $\mm$ is also well defined on the projective space $\PP(V)$ and that $\tr{\mm(\mu)}=-1$ for any $\mu\in V$.  In \cite{HnzSch, BllGhgHnz} many nice and useful results on the convexity of the image of the moment map have been obtained, which were used in \cite{RN} to study Ricci negative solvmanifolds (see Section \ref{RN-sec}).

For any compatible subgroup $G\subset\Gl_n(\RR)$, one has that $K:=G\cap\Or(n)$ is a maximal compact subgroup of $G$, $\ggo=\kg\oplus\pg$ is a Cartan decomposition, where $\pg:=\ggo\cap\sym(n)$, and $G=K\exp{\pg}$.  Consider $\ag\subset\pg$, a maximal abelian subalgebra; thus the corresponding torus $A=\exp{\ag}\subset G$ is also a compatible subgroup.  Let $\mm$ and $\mm_\ag$ denote the moment maps for the $G$-action and for the $A$-action, respectively, and  let $\ag_+\subset\ag$ be any Weyl chamber of $G$.  Consider also $W:=N_K(\ag)/C_K(\ag)$, the corresponding Weyl group.

\begin{theorem}\label{conv1}\cite{HnzSch}
For any closed $G$-invariant subset $X\subset\PP(V)$, $\mm(X)\cap\ag$ is the union of finitely many convex polytopes.  If in addition $X$ is irreducible, then $\mm(X)\cap\ag_+$ is a convex polytope and
$$
\mm(X)\cap\ag = \bigcup\limits_{k\in W} k\cdot(\mm(X)\cap\ag_+).
$$
\end{theorem}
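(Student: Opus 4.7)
The plan is to follow the strategy of Heinzner--Schwarz \cite{HnzSch} for the convexity of gradient (moment) map images in representations of real reductive groups, which extends the classical Atiyah--Guillemin--Sternberg theorem from the compact/K\"ahler setting to the real setting of a compatible subgroup $G\subset\Gl_n(\RR)$.

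First I would establish the Weyl-group symmetry $\mm(X)\cap\ag=\bigcup_{k\in W}k\cdot(\mm(X)\cap\ag_+)$. By the defining formula \eqref{defmm} and the $\Or(n)$-invariance of the ambient inner product, $\mm$ is $K$-equivariant for the $K$-action on $\sym(n)$ by conjugation, and $X$ is $K$-invariant since it is $G$-invariant. Combined with the standard fact that every $\Ad(K)$-orbit in $\pg$ meets $\ag$ in exactly one $W$-orbit and intersects $\ag_+$ in a single point, equivariance immediately yields the displayed formula and reduces everything to analyzing $\mm(X)\cap\ag_+$.

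Next, for convexity and the polytope property of $\mm(X)\cap\ag_+$, I would introduce the Kempf--Ness type function $\varphi_\xi(g)=\la \mm(g\cdot\mu),\xi\ra$ for $\xi\in\ag_+$, whose key feature is that it is convex along the real geodesics $t\mapsto\exp(tE)\cdot\mu$ for $E\in\pg$, thanks to the Cartan decomposition $\ggo=\kg\oplus\pg$ and the decomposition $G=K\exp(\pg)$. Using this convexity together with a stratification argument one can describe the critical set of $\varphi_\xi$ on $X$ and the accumulation behavior of $\mm$ in terms of the finite set of weights $F_{ij}^k$ of the representation. For $X$ irreducible (hence connected in both the Euclidean and Zariski topologies) the various strata glue into a single convex polytope contained in $\ag_+$; for general closed $G$-invariant $X$, the Noetherian decomposition of $X$ into its finitely many irreducible components yields the asserted finite union of polytopes.

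The main obstacle is the passage from the abelian (torus) case, where one can describe the image of $\mm$ on an $A$-orbit closure essentially as the convex hull of the weights $F_{ij}^k$ appearing in $\mu$ (compare \eqref{nice-m} and Example \ref{tricky}), to the full $G$-case. In the real reductive setting there is no ambient symplectic form that would allow one to invoke Atiyah's theorem directly, so one must rely on the convexity of $\varphi_\xi$ along the $\pg$-geodesics to sweep out the torus image by $K$-conjugation and verify that no gaps appear in $\mm(X)\cap\ag_+$. Once this sweeping is controlled, combining with the Weyl symmetry of the first step produces the global polytopal description of $\mm(X)\cap\ag$.
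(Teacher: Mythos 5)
The paper itself does not prove Theorem \ref{conv1}: it is quoted from Heinzner--Sch\"utzdeller \cite{HnzSch}, so your sketch has to be judged against the argument there. Your first step is fine and is indeed how the reduction to $\ag_+$ goes: $\mm$ is $K$-equivariant, a $G$-invariant $X$ is $K$-invariant, every $\Ad(K)$-orbit in $\pg$ meets $\ag$ in a single $W$-orbit, and every $W$-orbit in $\ag$ meets $\ag_+$; this gives the displayed union over $W$ once one knows what $\mm(X)\cap\ag_+$ is.

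The remaining steps have concrete problems. First, your key analytic input is misstated: $\varphi_\xi(g)=\la\mm(g\cdot\mu),\xi\ra$ is \emph{not} convex along the geodesics $t\mapsto\exp(tE)\cdot\mu$ for general $\xi$ and $E\in\pg$. What Kempf--Ness theory gives is that $t\mapsto\log|\exp(tE)\cdot\mu|$ is convex, equivalently that $t\mapsto\la\mm(\exp(tE)\cdot\mu),E\ra$ is non-decreasing; the pairing direction and the differentiation direction must coincide, and even then one gets monotonicity of $\la\mm,E\ra$, not convexity of it. Second, your parenthetical ``irreducible (hence connected in both the Euclidean and Zariski topologies)'' is false for real varieties (e.g.\ $xy=1$ is Zariski-irreducible with two Euclidean components), so the step where ``the various strata glue into a single convex polytope'' by a connectedness local-global principle cannot be run; this failure is precisely one of the reasons the real convexity theorem is much harder than Atiyah--Guillemin--Sternberg, and why \cite{HnzSch} instead argue by semialgebraic and stratification techniques together with a careful reduction to the abelian case on slices. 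Third, the passage from the abelian image (controlled by the convex hull of the weights $F_{ij}^k$, cf.\ \eqref{nice-m}) to $\mm(X)\cap\ag_+$ --- which you yourself flag as ``the main obstacle'' --- is the entire content of the theorem, and ``sweeping out the torus image by $K$-conjugation and verifying that no gaps appear'' restates the claim rather than proving it. As written, the proposal is a reasonable road map whose two stated ingredients are incorrect and whose central step is missing.
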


\begin{remark}
In particular, $\mm_\ag(X)$ is always a convex polytope if $X$ is irreducible since $\ag$ is the only Weyl chamber of $A$.  
\end{remark}

Therefore $\mm(\overline{G\cdot\mu})\cap\ag_+$ and $\mm_\ag(\overline{G\cdot\mu})$ are both convex polytopes for any $\mu\in V$ if $G$ is connected, but $\mm(\overline{G\cdot\mu})\cap\ag$ and $\mm(\overline{A\cdot\mu})\cap\ag$ are not convex in general (see Example \ref{tricky}).

For each $\mu\in V$, we define the following convex polytope in $\Diag(\ngo)$,
$$
\CH_\mu := \CH(F_{ij}^k:c(\mu)_{ij}^k\ne 0).
$$
Note that $F_{ij}^k=\mm(\mu_{ijk})$.  It is proved in \cite[Section 3.2]{RN} that
\begin{equation}\label{mtoro}
\Diagg\left(\mm\left(\overline{\Diag(\ngo)_{>0}\cdot\mu}\right)\right) = \CH_\mu, \qquad \Diagg\left(\mm\left(\Diag(\ngo)_{>0}\cdot\mu\right)\right) = \CH_\mu^\circ,
\end{equation}
where $\Diagg(A)$ denotes the diagonal part of a matrix $A$.  Conversely, $\overline{\Diag(\ngo)_{>0}\cdot\mu}$ is combinatorically determined by $\CH_\mu$ in the following way (see \cite[Lemma 3.10]{RN}):
\begin{equation}\label{deg-caras}
\overline{\Diag(\ngo)_{>0}\cdot\mu} =\left\{\lambda_J:\CH(F_{ij}^k:(i,j,k)\in J)\;\mbox{is a face of}\; \CH_\mu\right\},
\end{equation}
where for each subset $J\subset I_\mu:=\{ (i,j,k):c(\mu)_{ij}^k\ne 0\}$,
$$
\lambda_J:=\sum_{(i,j,k)\in J} c(\mu)_{ij}^k\,\mu_{ijk}.
$$
Moreover, if $c(\mu)_{ij}^k\ne 0$, then $\mu_{ij}^k\in\overline{\Diag(\ngo)_{>0}\cdot\mu}$ and $F_{ij}^k$ is an extreme point of $\CH_\mu$.  In particular,
\begin{equation}\label{FenT}
  F_{ij}^k\in \mm(\overline{\Diag(\ngo)_{>0}\cdot\mu})\cap\tg^n, \qquad \forall\; c(\mu)_{ij}^k\ne 0.
\end{equation}

\end{document}